\documentclass[12pt,reqno]{amsart}
\usepackage{hyperref}
\usepackage{amsmath,amssymb}
\usepackage[english]{babel}
\usepackage{caption}
\usepackage{amssymb,amsmath,euscript,enumerate,tikz}
\usepackage[margin=1in]{geometry}
\usepackage{graphicx, caption}
\usepackage{float}
\usepackage{pgf,tikz}
\usepackage{mathrsfs}
\usepackage{upgreek}
\usepackage{mathtools}
\usepackage[utf8]{inputenc}
\usepackage{xcolor}
\usepackage{tikz-cd}

\newtheorem{theorem}{Theorem}[section]
\newtheorem{lemma}[theorem]{Lemma}
\newtheorem{proposition}[theorem]{Proposition}

\newtheorem{corollary}[theorem]{Corollary}

\theoremstyle{definition}

\newtheorem{example}[theorem]{Example}

\theoremstyle{remark}
\newcommand\cdeg{\operatorname{cdeg}}
\newcommand \pdeg{\operatorname{pdeg}}
\newtheorem{remark}[theorem]{Remark}


\usepackage{color,soul}

\newcommand{\pd}{\operatorname{pd}}
\newcommand{\pv}{\operatorname{pv}}
\newcommand{\iv}{\operatorname{iv}}

\newcommand{\T}{\operatorname{Tor}}

\newcommand{\cl}{\mathrm{cl}}
\newcommand{\reg}{\mathrm{reg}}

\begin{document}

	\title[Binomial Edge Ideals of Generalized block graphs]{Binomial Edge Ideals of Generalized block graphs}
	\author{Arvind Kumar}
	\email{arvkumar11@gmail.com}
	
	\address{Department of Mathematics, Indian Institute of Technology
        Madras, Chennai, INDIA - 600036}

	\begin{abstract}
		We classify generalized block graphs whose binomial edge ideals admit a unique extremal Betti number. 
		We prove that the Castelnuovo-Mumford regularity of binomial edge ideals of generalized block graphs is 
		bounded below by $m(G)+1$, where $m(G)$ is the number of minimal cut sets of the graph  $G$ and obtain
		an improved upper bound for the regularity in terms of the number of maximal cliques and
		pendant vertices of $G$.
	\end{abstract}
	
	\keywords{Binomial edge ideal; Castelnuovo-Mumford regularity; Generalized block graph; Extremal Betti number.}
	
	\thanks{Mathematics Subject Classification 2010: 13D02, 13C13, 05E40}
	\maketitle
\section{Introduction}
Let $R = K[x_1,\ldots,x_m]$ be a standard graded polynomial ring over an arbitrary
field $K$, and $M$ be a finitely generated graded  $R$-module. 
Let
\[
0 \longrightarrow \bigoplus_{j \in \mathbb{Z}} R(-j)^{\beta_{p,j}^R(M)} 
\overset{\phi_{p}}{\longrightarrow} \cdots \overset{\phi_1}{\longrightarrow} \bigoplus_{j \in \mathbb{Z}} R(-j)^{\beta_{0,j}^R(M)} 
\overset{\phi_0}{\longrightarrow} M\longrightarrow 0
\]
be the minimal graded free resolution of $M$, where $p \leq m$ and
$R(-j)$ is the free $R$-module of rank $1$ generated in degree $j$.
The number $\beta_{i,j}^R(M)$ is  called the 
$(i,j)$-{\it th graded Betti number} of $M$. The
projective dimension  and Castelnuovo-Mumford regularity (henceforth called regularity) are two invariants associated with
$M$ that can be read off from the minimal graded free resolution of $M$. The 
{\it regularity} of $M$, denoted by $\reg(M)$, is defined as 
\[
\reg(M):=\max \{j-i \mid \beta_{i,j}^R(M) \neq 0\},
\] and the {\it projective dimension} of $M$, denoted by $\pd_R(M)$, is defined as \[\pd_R(M):=\max\{i : \beta_{i,j}^R(M) \neq 0\}.\]
A Betti number $\beta_{i,j}^R(M)\neq 0$ is called an {\it extremal Betti number} if $\beta_{r,s}^R(M)=0$ 
for all pairs $(r,s)\neq (i,j)$ with $r\geq i$, and $s\geq j$. Observe that $M$ admits   a unique extremal Betti number if and only if 
$\beta_{p,p+r}^R(M)\neq 0$, where $p =\pd_R(M)$ and $r=\reg(M)$. For a graded $R$-module $M$,  we 
denote the {\it Betti polynomial} of $M$ by $$B_M(s,t)=\sum_{i,j}\beta_{i,j}^R(M)s^it^j.$$

Herzog et al. in \cite{HH1}
and independently Ohtani in \cite{oh} introduced the notion of
binomial edge ideal corresponding to a finite simple graph. Let $G$ be a  simple graph on $[n]$.   Let
$S=K[x_1, \ldots, x_n,y_1, \ldots, y_n]$, where $K$ is a field.  The
{\it binomial edge ideal} of $G$ is  $J_G =(x_i y_j - x_j y_i :
\{i,j\} \in E(G), \; i <j)$. Researchers have found exact formulas or bounds for algebraic invariants of $J_G$,  such as codimension, depth, Betti numbers and regularity, in terms of combinatorial invariants of the underlying graph $G$, see e.g., \cite{josep,bn,dav,her1, KM3, AR1, MM, KM1, KM2}. The study of regularity and Betti numbers of homogeneous ideals has attracted a lot of attention in the recent past due to its algebraic and geometric importance.  In \cite{MM}, Matsuda and Murai proved that if $G$ is a graph on the vertex
set $[n]$, then  $\ell(G) \leq \reg(S/J_G) \leq n-1$, where $\ell(G)$ is the length of a
longest induced path in $G$. In particular, they
conjectured, and later proved by Kiani and Saaedi Madani \cite{KM3}, that $\reg(S/J_G) = n-1$
if and only if $G$ is a path on $n$ vertices. Another conjectured upper bound for $\reg(S/J_G)$ is 
given by  $\cl(G)$, the number of maximal cliques of $G$ \cite{KM2}. The latter conjecture has been recently 
proved for chordal graphs by Rouzbahani
Malayeri et al. \cite{MKM2018}, extending results of 
\cite{EZ, JACM, KM1}, and for some classes of non-chordal graphs in \cite{AR2}.

An interesting class of chordal graphs consists of {\it block graphs}, which are connected graphs whose {\it blocks} (i.e., maximal subgraphs that cannot be disconnected by removing a vertex) are cliques. Recently, in \cite{her2},  Herzog and Rinaldo improved the lower bound for the regularity of binomial edge ideals of block graphs and classified block graphs whose binomial edge ideal admits a unique extremal Betti number. In this article, we extend these results to the class of generalized
block graphs that contains block graphs. We also obtain improved lower and upper bounds for the regularity of binomial edge ideals of these graphs.

The article is organized as follows. In the second section, we recall some results on graphs and binomial edge ideals.  In the third section, 
we characterize generalized block graphs whose binomial edge ideal admits a unique extremal Betti number (Theorem \ref{unique}). In particular, 
we prove  that $\beta_{p(G),p(G)+m(G)+1}^S(S/J_G)$ is an extremal Betti number of $S/J_G$ if $G$ is a connected
generalized block graph (Theorem \ref{main}), where $p(G) =\pd_S(S/J_G)$ and $m(G)$ is the number of
minimal cut sets of $G$. As a consequence, we obtain $\reg(S/J_G) \geq m(G)+1$ (Corollary \ref{least-reg}).
In the fourth section, we obtain improved upper bound for the regularity of binomial edge ideals of generalized
block graphs in terms of the number of maximal cliques and pendant vertices of $G$ (Theorem \ref{chordal-improved}). 

\section{Preliminaries}
In this section, we recall some notation and terminology
from graph theory, and some important results about binomial edge ideals.

Let $G$  be a  finite simple graph with vertex set $V(G)$ and edge set
$E(G)$. For  $A \subseteq V(G)$, $G[A]$ denotes the {\it induced subgraph}
of $G$ on the vertex set $A$, that is the subgraph with edge set $E(G[A])=\{\{i,j\} \in E(G) : i, j \in A\}$. 
For a vertex $v$, $G \setminus v$ denotes the  induced subgraph of $G$
on the vertex set $V(G) \setminus \{v\}$. A vertex $v \in V(G)$ is
said to be a \textit{cut vertex} if $G \setminus v$ has  more
connected components than $G$. For $T \subset [n]$, let $\overline{T}=[n] \setminus T$,
$c_G(T)$ be the number of connected components of $G[\overline{T}]$ and $c_G$ the number of connected
components of $G$. We say that a subset $T \subset [n]$ is a {\it cut set} of $G$ if $c_G (T ) > c_G$.
A cut set of $G$ is said to be a \textit{minimal cut set} if it is minimal under inclusion.
A subset $U$ of $V(G)$ is said to be a 
\textit{clique} if $G[U]$ is a complete graph. 

Let $\Delta$ be a simplicial complex on the vertex set $[n]$ and $\mathcal{F}(\Delta)$ be the set of its facets. 
A facet $F$ of $\Delta$ is called a {\em leaf} if either $F$ is the only facet of $\Delta$ or else there exists a 
facet $G$, called a {\it branch} of $F$, such that for each facet $H$ of $\Delta$ with $H\neq F$, $H\cap F\subseteq G\cap F$.  

The simplicial complex $\Delta$ is called a {\em quasi-forest} if its facets can be ordered as
$F_1,\ldots,F_s$ such that for all $i>1$, the facet $F_i$ is a leaf of the simplicial complex with facets
$F_1,\ldots, F_{i-1}$. Such an order of the facets is called a {\em leaf order}. The simplicial complex whose facets
are the maximal cliques of a graph $G$ is called the {\it clique complex} of $G$ and denoted by $\Delta(G)$.
By \cite[Theorem 9.2.12]{MI}, $G$ is chordal if and only if $\Delta(G)$ is a quasi-forest.

A vertex
$v$ of $G$ is said to be a {\em free vertex} if $v$ belongs to exactly one maximal clique of $G$.
A vertex $v$ is said to be an \textit{internal vertex} of $G$ if it is not a free vertex. The set $N_G(v) = \{u \in V(G) ~
: ~ \{u,v\} \in E(G)\}$ is called the {\it neighborhood} of $v$, and  $G_v$ denotes the
graph on the vertex set $V(G)$ and edge set $E(G_v) =E(G) \cup \{
\{u,w\}: u,w \in N_G(v)\}$. Observe that, if $v$ is a free vertex, then $G_v=G$. 

Let $G_1,\ldots,G_{c_{G}(T)}$ be the connected 
components of $G[\overline{T}]$. For each $i$, let $\widetilde{G_i}$ denote the complete graph on $V(G_i)$ and
$P_T(G) = (\underset{i\in T} \cup \{x_i,y_i\}, J_{\widetilde{G_1}},\ldots, J_{\widetilde{G_{c_G(T)}}}).$
In \cite{HH1}, it was shown by Herzog et al.  that $J_G =  \underset{T \subseteq [n]}\cap P_T(G)$.
For each $i \in T$, if $i$ is a cut vertex of the graph $G[\overline{T} \cup \{i\}]$,
then we say that $T$ has the {\it cut point property}. Set $\mathcal{C}(G) =\{\emptyset \}
\cup \{ T: T \; \text{has the cut point property} \}$. It follows from  \cite[Corollary 3.9]{HH1} 
that $T \in \mathcal{C}(G)$ if and only if $P_T(G)$ is a minimal prime of $J_G$. Hence, by \cite[Theorem 3.2]{HH1} 
and \cite[Corollary 3.9]{HH1}, we have $J_G =  \underset{T \in \mathcal{C}(G)} \cap P_T(G)$.

\section{Extremal Betti number of generalized block graphs}
In this section, we study the extremal Betti number $\beta_{p(G),p(G)+j}^S(S/J_G)$ of binomial edge ideals 
of generalized block graphs, where $p(G) =\pd_S(S/J_G)$. A maximal connected subgraph of $G$ with no cut vertex is called a \textit{block}. 
A graph $G$ is called a  \textit{block graph} if each block of $G$ is a  clique.  In other words,  a block graph is a chordal graph such that every pair of blocks of $G$ intersects in at most one vertex.  Block graphs 
were extensively studied by many authors, see \cite{her1}, \cite{EZ}, \cite{her2}, \cite{JNR}.

Generalized block graphs are the 
generalization of block graphs and were introduced in \cite{KM6}.   A   chordal 
graph  $G$ is said to be a \textit{generalized block graph} if
$F_i,F_j,F_k\in  \mathcal{F}(\Delta(G))$ such that  $F_i\cap F_j\cap F_k\neq \emptyset$, then $F_i\cap F_j=F_i\cap F_k=F_j\cap F_k$.
One could see that all block graphs are  generalized block graphs.  By definition of  generalized block graph, it is clear that a 
subset $A$ of  vertices of   $G$ is a minimal cut set if and only if there exist $F_{t_1},\ldots,F_{t_q}\in \mathcal{F}(\Delta(G))$
such that $\bigcap_{j=1}^{q}F_{t_j}=A$, and for all other facets $F$ of $\Delta(G)$, $F\cap A=\emptyset$. Note that if $A$ is a minimal cut set,
then $A$ is a clique. For a minimal cut set $A$, we denote by $G_A$ the graph obtained from $G$ by 
replacing the cliques $F_{t_1},\ldots,F_{t_q}$ with  the clique on the vertex set $\underset{ j \in [q]
} \cup F_{t_j}$.

\begin{lemma}\label{cut-lemma1}
	Let $G$ be a graph on the vertex set $[n]$ and $A$ be a minimal cut set of $G$. 
	Then $A \in \mathcal{C}(G)$. Moreover, if $G$ is a generalized block graph, then for 
	every  $T \in \mathcal{C}(G)$, either $A \subseteq T$ or $ A \cap T = \emptyset$.
\end{lemma}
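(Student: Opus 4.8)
The plan is to prove the two assertions separately. For the first, I need to show that any minimal cut set $A$ of $G$ lies in $\mathcal{C}(G)$, i.e. that $A$ has the cut point property. Let me recall the characterization of minimal cut sets for generalized block graphs stated just before the lemma: there exist facets $F_{t_1},\ldots,F_{t_q}$ of $\Delta(G)$ with $\bigcap_j F_{t_j}=A$, and every other facet $F$ satisfies $F\cap A=\emptyset$. First I would identify the connected components of $G[\overline{A}]$. Removing the clique $A$ should separate the $q$ cliques $F_{t_1}\setminus A,\ldots,F_{t_q}\setminus A$ into distinct components (together with whatever else is attached), so that $c_G(A)>c_G$. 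Then for each vertex $i\in A$, I need to verify $i$ is a cut vertex of $G[\overline{A}\cup\{i\}]$: restoring $i$ reconnects all the pieces $F_{t_j}\setminus A$ through $i$ (since $i\in A\subseteq F_{t_j}$ for every $j$), and $i$ is the unique such junction, so deleting $i$ again disconnects them. This gives the cut point property.

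**For the second assertion**, which I expect to be the real content, I would argue by contradiction or directly. Suppose $T\in\mathcal{C}(G)$ but $A\cap T\neq\emptyset$ and $A\not\subseteq T$; pick $i\in A\cap T$ and $j\in A\setminus T$. Since $T$ has the cut point property, $i$ is a cut vertex of $G[\overline{T}\cup\{i\}]$. The key point is that $A$ is a clique all of whose vertices are mutually adjacent, so within $G[\overline{T}\cup\{i\}]$ the vertex $j$ (which lies in $\overline{T}$) is adjacent to $i$, and more importantly the entire neighborhood structure of $i$ coming from the facets $F_{t_1},\ldots,F_{t_q}$ is knit together by the common intersection $A$. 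Because $A\setminus T\subseteq\overline{T}$ and every $F_{t_\ell}$ contains $A$, all the sets $F_{t_\ell}\setminus T$ share the common vertices $A\setminus T$ in $G[\overline{T}\cup\{i\}]$; hence they remain in one connected component even after deleting $i$, contradicting that $i$ is a cut vertex separating them. This forces $A\subseteq T$ whenever $A\cap T\neq\emptyset$.

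**The hard part will be** handling the interaction with facets \emph{other} than $F_{t_1},\ldots,F_{t_q}$, and making precise which vertices of $i$'s neighborhood actually get separated when $i$ is removed. The subtlety is that $i$ could genuinely be a cut vertex of $G[\overline{T}\cup\{i\}]$ for reasons unrelated to $A$ (separating pieces that do not involve the minimal cut set $A$ at all), so I cannot simply assert $A\subseteq T$ from $i\in T$ alone. I must instead show that the specific obstruction created by $A\setminus T$ — namely that $A\setminus T$ is a clique contained in every $F_{t_\ell}$ and adjacent to $i$ — prevents $i$ from separating the $F_{t_\ell}$'s from each other, and then argue that this is the only configuration in which a vertex of $A$ enters $T$. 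Here I would lean heavily on the defining property of generalized block graphs, $F_i\cap F_j\cap F_k\neq\emptyset\Rightarrow F_i\cap F_j=F_i\cap F_k=F_j\cap F_k$, to control how the relevant facets overlap and to rule out stray adjacencies that would let $i$ act as a cut vertex after all.

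**Finally**, I would assemble these observations into the dichotomy: if $A\cap T=\emptyset$ we are done immediately, and otherwise the argument above upgrades $A\cap T\neq\emptyset$ to $A\subseteq T$. I expect the cleanest write-up to treat $j\in A\setminus T$ as the witness vertex and trace a path in $G[(\overline{T}\cup\{i\})\setminus\{i\}]$ connecting two components that $i$ was supposed to separate, the path running through $A\setminus T$, thereby deriving the contradiction.
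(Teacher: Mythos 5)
Your proposal has genuine gaps in both halves. For the first assertion, your approach is misdirected in scope: you prove it via the facet characterization of minimal cut sets (``there exist $F_{t_1},\ldots,F_{t_q}$ with $\bigcap_j F_{t_j}=A$ and all other facets disjoint from $A$''), but that characterization is available only for generalized block graphs, whereas the lemma asserts $A\in\mathcal{C}(G)$ for an \emph{arbitrary} graph $G$, which need not even be chordal. The paper's proof of this part is a two-line minimality argument needing no structure at all: if some $v\in A$ is not a cut vertex of $G[\overline{A}\cup\{v\}]$, then $c_G(A\setminus\{v\})\geq c_G(A)>c_G$, so $A\setminus\{v\}$ is a cut set properly contained in $A$, contradicting minimality. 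Moreover, even within the generalized block setting your sketch asserts without proof that deleting $A$ puts the sets $F_{t_j}\setminus A$ into distinct components; that statement is true but requires an argument you do not give.

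For the second assertion, you have correctly located the crux but not resolved it, and you say so yourself: showing that the facets $F_{t_1},\ldots,F_{t_q}$ stay joined through $A\setminus T$ after deleting $i$ does not contradict $i$ being a cut vertex of $G[\overline{T}\cup\{i\}]$, since $i$ might separate vertices having nothing to do with $A$, and your plan to ``rule out stray adjacencies'' by ``leaning heavily'' on the generalized block condition is exactly the missing step. The paper closes this hole with a precise claim: for $v\in A\cap T$ and $w\in A\setminus T$ one has $N_G(v)=N_G(w)$. Its proof: if $u\in N_G(w)\setminus N_G(v)$, take a facet $F$ containing $w$ and $u$; since $A$ is a minimal cut set there are two facets $F_1,F_2$ containing $v$ and $w$ but not $u$ (no facet through $v$ contains $u$), and then $w\in F_1\cap F_2\cap F\neq\emptyset$ while $v\in F_1\cap F_2$ but $v\notin F_1\cap F$ and $v\notin F_2\cap F$, violating the defining triple-intersection property. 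Once $N_G(v)=N_G(w)$ is known, every neighbour of $v$ in $G[\overline{T}]$ is adjacent to $w\in\overline{T}$, so all of them lie in a single component after deleting $v$, and $v$ is not a cut vertex of $G[\overline{T}\cup\{v\}]$ --- the desired contradiction, covering all possible separations at once, not just those among the $F_{t_\ell}$. Alternatively, your route could be completed by observing that every facet containing $i$ meets $A$ and hence is one of $F_{t_1},\ldots,F_{t_q}$, so $N_G(i)\subseteq\bigcup_{\ell}F_{t_\ell}$ and every such neighbour is adjacent to $j\in A\setminus T$; but some step of this kind is indispensable and is absent from your proposal.
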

\begin{proof}
	For the first claim, suppose that $A \notin \mathcal{C}(G)$. Then there exists $v \in A$ such 
	that $c_G (A \setminus \{v\}) = c_G (A)$. Since
	$A$ is a minimal cut set, we know that $c_G (A) > c_G$, and hence, $c_G (A \setminus \{v\}) > c_G$.
	This means that $A \setminus \{v\}$ is a cut set and is properly contained in $A$, against the minimality
	of $A$. Thus, $A \in \mathcal{C}(G)$. The second claim clearly holds when $|A| = 1$. Let $|A| \geq 2$ and 
	$T \in \mathcal{C}(G)$. If $T \cap A =\emptyset$, then there is nothing to prove. Assume that 
	$T \cap A \neq \emptyset$ and let $v \in T \cap A$. Then $v$ is a cut vertex of $G[\overline{T}\cup \{v\}]$.
	Suppose that there exists $ w \in A \setminus T$. We want to show that $N_G (v) = N_G (w)$. Suppose that there 
	exists $u \in  N_G (w) \setminus N_G (v)$ and let $F$ be a facet
	of $\Delta(G)$ containing $w$ and $u$. Since $A$ is a cut set, there are at least two facets $F_1 , F_2$ containing
	$v, w$ but not $u$. Then, we have $F_1 \cap F_2 \cap F \neq \emptyset$,  $v \in F_1 \cap F_2$ but
	$v \notin F_1 \cap F$ and $v \notin F_2 \cap F$,
	against the fact that G is a generalized block graph. Thus, $N_G(w)\subset N_G(v)$. Similarily, $N_G(v)\subset N_G(w)$, and hence,  $N_G(v)=N_G(w)$. Consequently, $v$ is not a cut vertex of   $G[\overline{T}\cup \{v\}]$,
	which is a contradiction. Hence, $A \subseteq T.$    
\end{proof} 
We now give an example of a chordal graph $G$ that is not a generalized block graph 
for which the second claim of Lemma \ref{cut-lemma1}, does not hold.
\begin{example}
	Let $G$ be a graph as shown in Fig. \ref{cho0}. Then, it can be seen that $G$ is
	a chordal graph that is not a generalized block graph. The sets  $A= \{2,3\}$ and $ B=\{3,4\}$ are
	minimal cut sets of $G$, thus  $A,B \in \mathcal{C}(G)$, and $A\cap B \neq \emptyset$. 
	However, $A\not\subseteq B$ and $B \not\subseteq A$.    
	\begin{figure}[h]
		\begin{center}
			\begin{tikzpicture}[scale=.5]
			\draw  (0,2)-- (2,0);
			\draw  (2,0)-- (4,2);
			\draw  (4,2)-- (4,0);
			\draw  (4,0)-- (2,0);
			\draw  (2,0)-- (0,0);
			\draw  (0,0)-- (0,2);
			\draw  (0,2)-- (4,2);
			\begin{scriptsize}
			\draw (0,2) circle (1.5pt);
			\draw (0,2.3) node {$2$};
			\draw (2,0) circle (1.5pt);
			\draw (2.0,-0.3) node {$3$};
			\draw (4,2) circle (1.5pt);
			\draw (4,2.3) node {$4$};
			\draw (4,0) circle (1.5pt);
			\draw (4,-0.3) node {$5$};
			\draw (0,0) circle (1.5pt);
			\draw (0,-0.3) node {$1$};
			\end{scriptsize}
			\end{tikzpicture}
			\caption{}\label{cho0}
			
		\end{center}
	\end{figure}    
\end{example}
Let $G$ be a  generalized graph on $[n]$.  Let $A$ be a minimal cut set of $G$. Set
\begin{equation}
Q_1=\bigcap_{\substack{
		T\subseteq [n] \\
		A\cap T = \emptyset
	}}
	P_T(G)~~,~~Q_2=\bigcap_{\substack{
			T\subseteq [n] \\
			A\subseteq T
		}}
		P_T(G).
		\nonumber
		\end{equation}
		By \cite[Theorem 3.2, Corollary 3.9]{HH1}, $J_{G_A} =\bigcap_{T\subset [n]}P_T(G_A)=\bigcap_{T \in \mathcal{C}(G_A)}P_T(G_A).$    
		It follows from \cite[Proposition 2.1]{Rauf} that $T \in \mathcal{C}(G_A)$ if and only if 
		$A \cap T =\emptyset$ and $T\in \mathcal{C}(G_A[\overline{A}])$. If $A \cap T=\emptyset $, then  $P_T(G)=P_T(G_A)$. Consequently, 
		$Q_1= J_{G_A}$. Note that 
		$$Q_2 =(x_i,y_i : i \in A)+\bigcap_{T\setminus A \subset [n]\setminus A}P_{T\setminus A}(G[\overline{A}])=(x_i,y_i : i \in A) 
		+J_{G[\overline{A}]},$$ where the last equality follows from \cite[Theorem 3.2]{HH1}. 
		Thus, $$Q_1+Q_2 = (x_i,y_i : i \in A) + J_{G_A[\overline{A}]}.$$            
		By virtue of Lemma \ref{cut-lemma1}, $J_G = Q_1 \cap Q_2$. This gives us the following short exact sequence,
		\begin{equation}\label{ses1}
		0  \longrightarrow  \dfrac{S}{J_{G} } \longrightarrow
		\dfrac{S}{Q_1} \oplus \dfrac{S}{Q_2} 
		\longrightarrow \dfrac{S}{Q_1+Q_2 } 
		\longrightarrow 0.
		\end{equation}
		The following example illustrates that, in general, $Q_1 \neq J_{G_A}$ for a minimal set $A$ of $G$.
		\begin{example}
			Let $G$ be a graph as shown in Fig. \ref{cho0}. Then, $T \in \mathcal{C}(G)$ if and only if $T\in \{ \emptyset, \{2,3\}, \{3,4\} \}$. Set  $A= \{2,3\}$ and $B=\{3,4\}$. Note that $A$ is a minimal cut set of $G$ and $G_A=G\cup \{1,4\}$. Let $T \subseteq [5]$ such that $A \cap T =\emptyset$. Then, $T \subseteq \{1,4,5\}$, and hence, $P_{\emptyset}(G) \subseteq P_T(G)$. Thus, $$Q_1=\bigcap_{T \subseteq \{1,4,5\}} P_T(G)=P_{\emptyset}(G)=J_{K_5}\neq J_{G_A}.$$ However, \begin{align*}
			Q_2&=\bigcap_{T\subseteq [5], \;A \subset T}P_T(G)=(x_2,y_2,x_3,y_3)+ \bigcap_{T\subseteq \{1,4,5\}}P_T(G[\{1,4,5\}])\\&=(x_2,y_2,x_3,y_3, x_4y_5-x_5y_4)=P_{A}(G).
			\end{align*}
			Since $J_G=P_{\emptyset}(G) \cap P_A(G) \cap P_B(G)$, we have $J_G \neq  Q_1 \cap Q_2.$
		\end{example}
		Let $p(G)$ denote the projective dimension of $S/J_G$. Then $\pd_S(S/Q_1) =p(G_A)$. Let $T=K[x_1,\ldots,x_n]$.
		Let $0 < m< n$, $I\subset R= K[x_1,\ldots,x_m]$ and $J\subset R'=
		K[x_{m+1},\ldots,x_n]$ be  homogeneous  ideals. Then,   
		the minimal graded free resolution of ${T}/{(I + J)}$
		is  the tensor product of the
		minimal free resolutions of ${R}/{I}$ and  ${R'}/{J}$. For $A \subset [n]$, set  $S_A=K[x_i,y_i : \;  i \notin A]$.
		Hence, $\pd_S(S/Q_2) = 2|A|+\pd_{S_A}(S_A/J_{G[\overline{A}]})=2|A|+p(G[\overline{A}])$
		and $\pd_S(S/(Q_1+Q_2)) =2|A|+\pd(S_A/J_{G_A[\overline{A}]})= 2|A|+p(G_A[\overline{A}])$. 
		
		In \cite{KM6}, Kiani and Saeedi Madani  obtained the  depth of binomial edge ideals of generalized block graphs, and
		hence, their projective dimension by the Auslander-Buchsbaum formula. Recall that the {\it clique number} of a
		graph G, denoted $\omega(G)$, is the maximum size of the maximal cliques of $G$. Let $G$ be
		a generalized block graph on $[n]$. For each $i=1,\ldots,\omega(G)-1$, we set 
		$$\mathcal{A}_{i}(G)=\{A\subseteq [n] : |A|=i,A~\mathrm{is~a~minimal~cut~set~of~}G\}$$
		and  $a_i(G)=|\mathcal{A}_{i}(G)|$. Observe that a generalized block graph $G$ is a block graph if and only if $a_i(G)=0$,
		for all $i>1$. Let $m(G)$ denote the number of minimal cut sets of $G$. Then, $m(G)= \sum_{i=1}^{\omega(G)-1} a_i(G)$.
		
		By \cite[Theorem 3.2]{KM6} and the Auslander-Buchsbaum formula, it follows that:
		\begin{theorem}\label{pd-gb}    Let $G$ be a generalized block graph on $[n]$. Then,
			$$p(G)=n-c_G+\sum_{i=2}^{\omega(G)-1}(i-1)a_i(G),$$ where $c_G$ is the number of  connected components of $G$.
		\end{theorem}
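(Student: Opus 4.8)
The plan is to read off $p(G)$ from the depth of $S/J_G$ via the Auslander--Buchsbaum formula. Since $S=K[x_1,\ldots,x_n,y_1,\ldots,y_n]$ is a polynomial ring in $2n$ variables over the field $K$, we have $\operatorname{depth} S = 2n$, and because every finitely generated $S$-module has finite projective dimension, the graded Auslander--Buchsbaum formula gives
$$p(G) = \pd_S(S/J_G) = 2n - \operatorname{depth}_S(S/J_G).$$
Thus the whole problem reduces to knowing $\operatorname{depth}_S(S/J_G)$, which is precisely the quantity computed by Kiani and Saeedi Madani.

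First I would invoke their depth formula \cite[Theorem 3.2]{KM6}: for a generalized block graph $G$ on $[n]$ it states
$$\operatorname{depth}_S(S/J_G) = n + c_G - \sum_{i=2}^{\omega(G)-1}(i-1)\,a_i(G).$$
Substituting into the identity above and simplifying gives
$$p(G) = 2n - \Bigl( n + c_G - \sum_{i=2}^{\omega(G)-1}(i-1)a_i(G)\Bigr) = n - c_G + \sum_{i=2}^{\omega(G)-1}(i-1)a_i(G),$$
which is the asserted formula. As a sanity check I would test the two extreme cases: for $G=K_n$ there are no minimal cut sets, so all $a_i(G)=0$ and the formula reads $p(K_n)=n-1$, matching the codimension of the Cohen--Macaulay $2\times n$ determinantal ring $S/J_{K_n}$; and for a path $P_n$, where every internal vertex is a singleton minimal cut set and $\omega(P_n)=2$, the sum is empty and again $p(P_n)=n-1$.

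The entire mathematical content here lives in the depth computation of \cite{KM6}, which we use as a black box; the remaining steps are bookkeeping, and this is the cleanest route. The main obstacle, were one to reprove the formula for $p(G)$ directly, would be to upgrade the short exact sequence (\ref{ses1}) into an exact numerical equality rather than a mere inequality. Concretely, one would induct on $m(G)=\sum_i a_i(G)$: for a minimal cut set $A$ one has the recorded values $\pd_S(S/Q_1)=p(G_A)$, $\pd_S(S/Q_2)=2|A|+p(G[\overline{A}])$, and $\pd_S(S/(Q_1+Q_2))=2|A|+p(G_A[\overline{A}])$, while filling in $A$ to form $G_A$ strictly decreases $m$. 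The delicate point is that extracting $\pd_S(S/J_G)$ from the long exact sequence in $\operatorname{Tor}$ attached to (\ref{ses1}) requires controlling the top homological Betti numbers and ruling out cancellation at the relevant homological spot; this nonvanishing analysis is exactly what \cite{KM6} carries out, so citing their depth formula bypasses it entirely.
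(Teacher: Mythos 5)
Your proposal is correct and matches the paper exactly: the paper derives Theorem \ref{pd-gb} in one line from the depth formula of Kiani and Saeedi Madani \cite[Theorem 3.2]{KM6} together with the Auslander--Buchsbaum formula $p(G)=2n-\operatorname{depth}_S(S/J_G)$, which is precisely your argument (your sanity checks for $K_n$ and $P_n$ are also accurate). The direct inductive route you sketch as an alternative is not pursued in the paper either; like you, the author treats the depth computation of \cite{KM6} as a black box.
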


		We recall the notion of decomposability from \cite[Section 2]{Rauf} and \cite{cactus}. 
		A graph $G$ is called \textit{decomposable} if there exist subgraphs $G_1$ and $ G_2$ such that  
		$G= G_1 \cup G_2$, $V(G_1)\cap V(G_2)=\{v\}$ and  $v$ is a 
		free vertex of both $G_1$ and $G_2$.

		A graph $G$ is called \textit{indecomposable} if it is not decomposable. Up to ordering, $G$
		has  a unique decomposition into indecomposable subgraphs, i.e., there exist 
		$G_1,\ldots,G_r$ indecomposable induced subgraphs of $G$ with 
		$G=G_1\cup \cdots \cup G_r$ such that for each $i \neq j$, either 
		$V(G_i) \cap V(G_j) = \emptyset$ or $V(G_i) \cap V(G_j) =\{v\}$ and $v$ is a  free vertex 
		of both $G_i$ and $G_j$.
		
		In \cite[Proposition 3]{her2}, Herzog and Rinaldo proved that:
		\begin{proposition}\cite[Proposition 1.3]{her2}\label{decomposable}
			Let $G=G_1\cup G_2$ be a decomposable graph. Let
			$S_i =K[x_j,y_j:j\in V(G_i)]$, for $i=1,2$. Then,
			\[
			B_{{S}/{J_G}}(s,t)= B_{{S_1}/{J_{G_1}}}(s,t)
			B_{{S_2}/{J_{G_2}}}(s,t).
			\]
		\end{proposition}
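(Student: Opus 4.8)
The plan is to realise $S/J_G$ as a tensor product over the coordinate ring of the gluing vertex and to prove that the two sides are Tor-independent there, so that the tensor product of their minimal resolutions is again minimal. Let $v$ be the common free vertex, set $R_0=K[x_v,y_v]\subseteq S_1\cap S_2$ and $A_i=S_i/J_{G_i}$. As $V(G_1)\cap V(G_2)=\{v\}$, the edge sets of $G_1$ and $G_2$ are disjoint, so $J_G=J_{G_1}S+J_{G_2}S$; moreover $S=S_1\otimes_{R_0}S_2$, and therefore $S/J_G\cong A_1\otimes_{R_0}A_2$. Since graded Betti numbers are unchanged under the flat base change $K\hookrightarrow\overline K$, I may assume $K=\overline K$, so that $A_1,A_2$ are geometrically reduced. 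The crux will be to show
$$\T_i^{R_0}(A_1,A_2)=0\qquad(i>0).$$
Granting this, choose minimal graded $S_i$-free resolutions $\F$ of $A_1$ and $\G$ of $A_2$; each of their terms is $R_0$-free because $S_i$ is a polynomial ring over $R_0$. The total complex $\mathrm{Tot}(\F\otimes_{R_0}\G)$ then consists of free $S$-modules, its homology is $\T_*^{R_0}(A_1,A_2)=A_1\otimes_{R_0}A_2=S/J_G$ concentrated in degree $0$, and its differentials have entries in $\m_{S_1}+\m_{S_2}\subseteq\m$; hence it is the minimal free resolution of $S/J_G$, and comparing terms degree by degree yields $B_{S/J_G}(s,t)=B_{A_1}(s,t)\,B_{A_2}(s,t)$, which is the assertion.

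The input from freeness is the statement that every nonzero linear form of $R_0$ is a nonzerodivisor on $A_1$ and on $A_2$. To see this for $A_1$, note that if $v$ is a free vertex of $G_1$ then $v\notin T$ for every $T\in\mathcal C(G_1)$: in any induced subgraph $G_1[\overline T\cup\{v\}]$ the neighbourhood of $v$ is contained in the unique maximal clique containing $v$, hence is itself a clique, so $v$ cannot be a cut vertex of that subgraph — this is exactly the mechanism used in the proof of Lemma \ref{cut-lemma1}. Consequently the only linear forms lying in a minimal prime $P_T(G_1)$ of $J_{G_1}$ are the $x_j,y_j$ with $j\in T$, so no nonzero degree-$1$ element of $R_0$ belongs to any associated prime of the reduced ring $A_1$. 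The same argument applies to $A_2$.

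It remains to establish the Tor-vanishing. Since $R_0$ is regular of dimension $2$, $\T_{\geq 3}$ vanishes automatically. Because $x_v$ is a nonzerodivisor on $A_i$, the top homology of the Koszul complex on $x_v,y_v$ gives $\T_2^{R_0}(A_i,K)=0$, i.e.\ $\pd_{R_0}A_i\le 1$; in particular $\T_{\geq 2}^{R_0}(A_1,A_2)=0$. For the first Tor, put $T=\T_1^{R_0}(A_1,A_2)$. Localising at the generic point of $R_0$ shows $T$ is $R_0$-torsion. On the other hand, for any nonzero linear form $\ell\in R_0$ the exact sequence $0\to A_2\xrightarrow{\ell}A_2\to A_2/\ell A_2\to 0$ induces, after applying $A_1\otimes_{R_0}-$, the exact piece $\T_2^{R_0}(A_1,A_2/\ell A_2)\to T\xrightarrow{\ell}T$ whose left term is $0$ by $\pd_{R_0}A_1\le 1$; thus $\ell$ acts injectively on $T$. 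Over $\overline K$ every nonzero $f\in R_0$ is a product of linear forms, each injective on $T$, so $f$ is injective on $T$; since $T$ is torsion this forces $T=0$.

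The main obstacle is precisely this vanishing of $T=\T_1^{R_0}(A_1,A_2)$. One cannot reduce it to flatness: although the freeness of $v$ makes $x_v$ and $y_v$ separately nonzerodivisors on $A_i$, they do not in general form a regular sequence on $A_i$ — already for a single edge at $v$ the image of $y_v$ is a zerodivisor modulo $x_v$ — so $A_i$ is not $R_0$-flat and the higher Tor need not vanish for formal reasons. What saves the argument is the sharper consequence of freeness isolated above, namely that \emph{every} nonzero linear form of $R_0$ is a nonzerodivisor on $A_i$; this single property simultaneously forces $\pd_{R_0}A_i\le 1$ and the injectivity of all linear forms on the torsion module $T$, which together kill $T$.
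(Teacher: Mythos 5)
Your proof is correct, and it is worth noting at the outset that the paper itself offers no argument here: Proposition \ref{decomposable} is simply quoted from Herzog--Rinaldo \cite{her2}. The proof in the literature runs through an equivalent but differently packaged mechanism: realize $G$ from the disjoint union $G_1\sqcup G_2$ by identifying two free vertices $v_1,v_2$, observe that the Betti polynomial of $S_1/J_{G_1}\otimes_K S_2/J_{G_2}$ is the product of the two Betti polynomials, and then check that the two linear forms $x_{v_1}-x_{v_2},\,y_{v_1}-y_{v_2}$ form a regular sequence on this tensor product (a gluing lemma in the spirit of \cite{Rauf}), since cutting by a regular sequence of linear forms preserves all graded Betti numbers. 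Your Tor-independence over $R_0=K[x_v,y_v]$ is the same phenomenon seen homologically: $\T_i^{R_0}(A_1,A_2)$ is precisely the Koszul homology of those two diagonal linear forms on $A_1\otimes_K A_2$. What your route buys is that it isolates the actual content cleanly and self-containedly: freeness of $v$ forces $v\notin T$ for every $T\in\mathcal{C}(G_i)$ (your neighbourhood-is-a-clique argument is exactly right), hence every nonzero linear form of $R_0$ avoids all minimal primes $P_T(G_i)$ of the reduced ring $A_i$, and this single nonzerodivisor statement simultaneously yields $\pd_{R_0}A_i\le 1$ and the injectivity of linear forms on the torsion module $\T_1^{R_0}(A_1,A_2)$, killing it. Your closing remark that one cannot shortcut this via flatness is also accurate: $x_v,y_v$ need not be a regular sequence on $A_i$, so $A_i$ is genuinely not $R_0$-flat, and the first Tor must be killed by hand.

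Two small repairs, neither a real gap. First, your assertion that over $\overline K$ \emph{every} nonzero $f\in R_0$ is a product of linear forms is false for inhomogeneous $f$ (e.g.\ $x_v+y_v^2$); you need $f$ homogeneous. Since $T=\T_1^{R_0}(A_1,A_2)$ is a graded module, any homogeneous torsion element is annihilated by a nonzero \emph{homogeneous} element (compare graded components of an arbitrary annihilator), and homogeneous binary forms over $\overline K$ do split into linear factors, so the argument closes as intended. Second, $A_i$ is not finitely generated as an $R_0$-module, so the implication $\T_2^{R_0}(A_i,K)=0\Rightarrow\pd_{R_0}A_i\le 1$ should be justified via minimal graded free resolutions of bounded-below graded modules with graded Nakayama, which applies here since $A_i$ is nonnegatively graded with finite-dimensional components; alternatively, note directly that $T$ embeds into $F_1\otimes_{R_0}A_2$ for a length-one $R_0$-free resolution of $A_1$, once that resolution is available.
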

		It follows from Proposition  \ref{decomposable} that if $G = G_1 \cup \cdots \cup G_r$
		is a decomposition of $G$ into indecomposable graphs, then
		$\reg(S/J_G) = \underset{i \in [r]} \sum \reg(S_i/J_{G_i})$ and  $p(G) = \underset{ i \in [r]} \sum p({G_i})$.
		Also, if for each $i$, $\beta^{S_i}_{p(G_i),p(G_i)+j_i}(S_i/J_{G_i})$ is an extremal Betti
		number of $S_i/J_{G_i}$, then $$\beta^S_{p(G),p(G)+j}(S/J_G)=\underset{i\in [r]} \prod \beta^{S_i}_{p(G_i),p(G_i)+j_i}(S_i/J_{G_i})$$ is an
		extremal Betti number of $S/J_G$, where $j=j_1+\cdots+j_r$. Therefore,  it is enough to find the position of the extremal 
		Betti number $\beta_{p(G),p(G)+i}^S(S/J_G)$ for  indecomposable graphs.
		\begin{lemma}\label{gen-lem}
			Let $G$ be a connected indecomposable generalized block graph and let $F_1,\ldots,F_r$ be a
			leaf order of $\mathcal{F}(\Delta(G))$. Denote by $F_{t_1},\ldots,F_{t_q}$ all the branches 
			of the leaf $F_r$. Set $A=F_r \cap \bigcap_{i=1}^qF_{t_i}$ and $\alpha=|A|$. Then, 
			\begin{enumerate}
				\item[(a)] the graphs $G_A, \; G_A[\overline{A}]$ and $G[\overline{A}]$ are generalized block graphs.
				\item[(b)] for $i \neq \alpha$, $a_i(G_A)=a_i(G)$ and $a_{\alpha}(G_A)=a_{\alpha}(G)-1$. 
				In particular, $$m(G_A)=m(G)-1\; \text{ and }\; p(G_A)=p(G)-\alpha+1. $$ 
				\item[(c)] for $i \neq \alpha$, $a_i(G_A[\overline{A}])=a_i(G)$ and $a_{\alpha}(G_A[\overline{A}])=a_{\alpha}(G)-1$.
				In particular, $$m(G_A[\overline{A}])=m(G)-1\; \text{ and }\; p(G_A[\overline{A}])=p(G)-2\alpha+1. $$ 
				\item[(d)] for $i \neq \alpha$, $a_i(G[\overline{A}])\leq a_i(G)$ and $a_{\alpha}(G[\overline{A}])\leq a_{\alpha}(G)-1$.
				In particular, $$m(G[\overline{A}])\leq m(G)-1\; \text{ and }\; p(G[\overline{A}])=p(G)-2\alpha-q+1. $$     
			\end{enumerate}
		\end{lemma}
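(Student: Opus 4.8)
The plan is to reduce everything to bookkeeping of minimal cut sets, and then to extract the $a_i$, $m$ and $p$ assertions from Theorem~\ref{pd-gb}. The first step, on which all of (a)--(d) rest, is to pin down the local structure at $A$: I claim that the facets of $\Delta(G)$ containing $A$ are exactly $F_r,F_{t_1},\dots,F_{t_q}$ and that these pairwise meet precisely in $A$, a ``sunflower'' with core $A$. To see this, fix a genuine branch $F_{t_1}$, so that $H\cap F_r\subseteq F_{t_1}\cap F_r\subseteq F_{t_1}$ for every facet $H\neq F_r$; then for any other branch $F_{t_j}$ we get $\emptyset\neq F_{t_j}\cap F_r\subseteq F_{t_1}\cap F_{t_j}\cap F_r$, so the generalized block graph condition applied to $F_{t_1},F_{t_j},F_r$ forces $F_{t_1}\cap F_r=F_{t_j}\cap F_r=F_{t_1}\cap F_{t_j}$. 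Hence all branches meet $F_r$ (and each other) in the common set $A$, every non-branch facet is disjoint from $F_r\supseteq A$, and so $A=F_r\cap\bigcap_j F_{t_j}$ is a minimal cut set. I will also invoke Lemma~\ref{cut-lemma1} in the form that distinct minimal cut sets of a generalized block graph are pairwise disjoint and that every facet either contains $A$ or misses it.

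For (a), the two induced subgraphs $G_A[\overline{A}]$ and $G[\overline{A}]$ are generalized block graphs once $G_A$ is, because an induced subgraph of a generalized block graph is again one: chordality is inherited, and a maximal clique of the subgraph has the form $F\cap\overline{A}$ for a facet $F$ of the ambient graph, so the triple-intersection condition pulls back directly. Thus the real content is $G_A$. Writing $\widetilde{F}=\bigcup_j F_{t_j}\cup F_r$, the facets of $\Delta(G_A)$ are $\widetilde{F}$ together with the facets of $G$ disjoint from $A$ (no such facet is absorbed, since $G[\widetilde{F}\setminus A]$ is a \emph{disjoint} union of the cliques $F_{t_j}\setminus A$ and $F_r\setminus A$). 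I would verify chordality of $G_A$ by exhibiting a perfect elimination order, or equivalently a leaf order obtained from that of $G$ by collapsing the facets at $A$, and then check the triple condition: for three facets of $G_A$ none of which is $\widetilde{F}$ it is inherited from $G$, while if one is $\widetilde{F}$ one reduces, via the sunflower structure, to the triple condition for a suitable triple of facets of $G$.

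For (b) and (c) I would set up an explicit correspondence of minimal cut sets. Merging the facets at $A$ destroys $A$ as a cut set (its vertices now lie in the single clique $\widetilde{F}$), while every other minimal cut set $B$, being disjoint from $A$, survives with its size unchanged; hence $a_\alpha(G_A)=a_\alpha(G)-1$ and $a_i(G_A)=a_i(G)$ for $i\neq\alpha$. Since $c_{G_A}=c_G$ and $|V(G_A)|=n$, Theorem~\ref{pd-gb} gives $m(G_A)=m(G)-1$ and $p(G_A)=p(G)-\alpha+1$. Passing to $G_A[\overline{A}]$ deletes the $\alpha$ vertices of $A$, which sit inside $\widetilde{F}$ and touch no other facet, so the minimal cut sets are unchanged; only $|V|$ drops by $\alpha$ while $c$ stays $c_G$, and Theorem~\ref{pd-gb} yields $p(G_A[\overline{A}])=p(G_A)-\alpha=p(G)-2\alpha+1$.

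For (d) the deletion genuinely alters the topology: the sunflower splits, and I would first show $c_{G[\overline{A}]}=c_G+q$, using that the $q+1$ cliques $F_r\setminus A,F_{t_1}\setminus A,\dots,F_{t_q}\setminus A$ separate and that the tree-like cut-set structure of a generalized block graph prevents any other path from reconnecting them. The cut-set counts now only satisfy inequalities, because deleting $A$ can shrink a branch $F_{t_j}$ so far that a minimal cut set lying just beyond it is no longer separating; but $A$ itself is always lost, giving $a_\alpha(G[\overline{A}])\le a_\alpha(G)-1$ and $a_i(G[\overline{A}])\le a_i(G)$. I expect the exact projective dimension to be the main obstacle: unlike in (b) and (c) the individual $a_i$ are not preserved, so one must show that the weighted quantity $\sum_i (i-1)a_i$ governing Theorem~\ref{pd-gb} changes by exactly $\alpha-1$, and combine this with $c_{G[\overline{A}]}=c_G+q$ and $|V|=n-\alpha$ to reach $p(G[\overline{A}])=p(G)-2\alpha-q+1$. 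This is precisely the point at which the full force of the leaf/branch hypotheses on $F_r$ must be used, and where the analysis is most delicate.
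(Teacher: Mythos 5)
Your overall route --- pinning down the sunflower structure at $A$, matching minimal cut sets between $G$, $G_A$, $G_A[\overline{A}]$, $G[\overline{A}]$, and feeding the counts into Theorem \ref{pd-gb} --- is precisely the paper's; indeed your sunflower argument ($F_r\cap F_{t_i}=F_{t_j}\cap F_{t_k}=A$ for all branches, and $F_r\cap F_l=\emptyset$ for every non-branch $F_l$) appears almost verbatim inside the proof of Theorem \ref{main}, and your handling of (a), (b), (c) is correct and at least as detailed as the paper's one-line identifications $\mathcal{A}_i(G_A)=\mathcal{A}_i(G)$, $\mathcal{A}_{\alpha}(G_A)=\mathcal{A}_{\alpha}(G)\setminus\{A\}$. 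The genuine gap is the step you flagged yourself and left open in (d): your plan to prove that $\sum_{i\geq 2}(i-1)a_i$ drops by \emph{exactly} $\alpha-1$, so as to get the equality $p(G[\overline{A}])=p(G)-2\alpha-q+1$, cannot succeed, because that equality is false as written. Take $G$ with facets $F_1=\{b_1,b_2,c\}$, $F_2=\{a_1,a_2,b_1,b_2\}$, $F_3=\{a_1,a_2,y\}$, $F_4=\{a_1,a_2,x\}$: this is a connected indecomposable generalized block graph, the listed order is a leaf order with last leaf $F_r=F_4$, branches $F_2,F_3$, so $q=2$, $A=\{a_1,a_2\}$, $\alpha=2$. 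Its minimal cut sets are $A$ and $B=\{b_1,b_2\}$, so by Theorem \ref{pd-gb}, $p(G)=7-1+2=8$; but $G[\overline{A}]$ is a triangle together with two isolated vertices, which has no cut set at all, so $p(G[\overline{A}])=5-3=2<3=p(G)-2\alpha-q+1$. The mechanism is exactly the one your inequalities in (d) allow for: $B$ is disjoint from $A$ yet ceases to be a cut set after $A$ is deleted (here $F_2\setminus A=B$, so the petal beyond $B$ vanishes), and only the containments $\mathcal{A}_i(G[\overline{A}])\subseteq\mathcal{A}_i(G)$ and $\mathcal{A}_{\alpha}(G[\overline{A}])\subseteq\mathcal{A}_{\alpha}(G)\setminus\{A\}$ survive.

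The resolution is that the ``$=$'' in the displayed formula of (d) is a misprint for ``$\leq$'': the paper's own proof derives only $p(G[\overline{A}])\leq p(G)-2\alpha-q+1$, and the inequality is all that is ever used (Theorem \ref{main} invokes $p(G[\overline{A}])\leq p(G)-2\alpha-q+1$ in Case (2) and $p(G[\overline{A}])\leq p(G)-q-1$ in Case (1)). Moreover, the inequality follows immediately from what you already established: granting $c_{G[\overline{A}]}=q+1$ (your separation argument, which the paper merely asserts), Theorem \ref{pd-gb} gives $p(G[\overline{A}])=(n-\alpha)-(q+1)+\sum_{i\geq 2}(i-1)a_i(G[\overline{A}])\leq (n-\alpha)-(q+1)+\sum_{i\geq 2}(i-1)a_i(G)-(\alpha-1)=p(G)-2\alpha-q+1$, where the middle step uses your containments (and, for $\alpha=1$, the fact that sets in $\mathcal{A}_1$ contribute nothing to the weighted sum). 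So the ``delicate analysis'' you anticipated does not exist: you should have plugged your subset relations straight into Theorem \ref{pd-gb} and concluded with $\leq$, rather than reserving the equality as future work --- pursued as stated, that final step would have failed.
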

		\begin{proof}
			(a) This easily follows by the fact that $G$ is a generalized block graph.
			\par (b) Notice that for $i \neq \alpha, \mathcal{A}_i(G_A)=\mathcal{A}_i(G)$ and 
			$\mathcal{A}_{\alpha}(G_A)=\mathcal{A}_{\alpha}(G)\setminus \{A\}$. Thus, by Proposition
			\ref{pd-gb}, $p(G_A)=n-1+\sum_{i=2}^{\omega(G_A)-1}(i-1)a_i(G_A)=p(G)-\alpha+1$ and  $m(G_A)=m(G)-1$.
			\par (c)  Notice that for $i \neq \alpha, \mathcal{A}_i(G_A[\overline{A}])=\mathcal{A}_i(G)$ and 
			$\mathcal{A}_{\alpha}(G_A[\overline{A}])=\mathcal{A}_{\alpha}(G)\setminus \{A\}$. Thus, 
			by Proposition \ref{pd-gb}, 
			$p(G_A[\overline{A}])=(n-\alpha)-1+\sum_{i=2}^{\omega(G_A[\overline{A}])-1}(i-1)a_i(G_A[\overline{A}])=p(G)-2\alpha+1$ 
			and $m(G_A[\overline{A}])= \sum_{i=1}^{\omega(G_A[\overline{A}])-1}a_i(G_A[\overline{A}])=m(G)-1$.
			\par (d) Let $B$ be a minimal cut set of $G[\overline{A}]$. Since $G[\overline{A}]$ is an induced 
			subgraph of $G$ and $B\cap A=\emptyset$, $B$ is a minimal cut set of $G$. Therefore, for $i \neq \alpha$, 
			$\mathcal{A}_i(G[\overline{A}])\subseteq \mathcal{A}_i(G)$ and 
			$\mathcal{A}_{\alpha}(G[\overline{A}])\subseteq \mathcal{A}_{\alpha}(G)\setminus \{A\}$. Thus,
			$m(G[\overline{A}]) = \sum_{i=1}^{\omega(G[\overline{A}])-1}a_i(G[\overline{A}])\leq m(G)-1$ and 
			by Proposition \ref{pd-gb}, $p(G[\overline{A}])=(n-\alpha)-(q+1)+\sum_{i=2}^{\omega(G[\overline{A}])-1}(i-1)a_i(G[\overline{A}])\leq
			p(G)-2\alpha-q+1$.
		\end{proof}
		Recall that a vertex $v$ is said to be an {\it internal vertex} of $G$  if it is not a free vertex. 
		For $v \in V(G)$, let $\cdeg_G(v)$ denote the number of maximal cliques of $G$ which contains $v$. 
		The number of free vertices of $G$ is denoted by $f(G)$.
		
		\begin{theorem}
			\label{main}
			Let $G$ be a connected indecomposable   generalized block graph on the vertex set $[n]$.
			Then,  $\beta_{p(G),p(G)+m(G)+1}^S(S/J_G)$ is an  extremal Betti number  of $S/J_G$. Moreover, 
			if $G$ is a complete graph or for every internal vertex
			$v$, $\cdeg_G(v) >2$, then $\beta_{p(G),p(G)+m(G)+1}^S(S/J_G)=f(G)-1$.
		\end{theorem}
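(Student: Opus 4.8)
The plan is to argue by induction on the number $m(G)$ of minimal cut sets, reducing via Proposition \ref{decomposable} to the case where $G$ is connected, indecomposable and has at least one edge. The base case is $m(G)=0$: a connected generalized block graph with no minimal cut set has no cut set at all, hence (being chordal) is a complete graph $K_n$, and $J_{K_n}$ is the ideal of $2\times 2$ minors of the generic $2\times n$ matrix, resolved by the linear Eagon--Northcott complex; one reads off $p(K_n)=n-1$, $\beta_{n-1,n}^S(S/J_{K_n})=n-1=f(K_n)-1$, and that $(n-1,n)=(p(K_n),p(K_n)+m(K_n)+1)$ is the unique extremal position, settling both assertions (and the ``complete graph'' clause) simultaneously.

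For the inductive step I take the minimal cut set $A=F_r\cap\bigcap_{i=1}^q F_{t_i}$ attached to the leaf $F_r$ as in Lemma \ref{gen-lem}, put $p=p(G)$, $m=m(G)$, $\alpha=|A|$, and feed the short exact sequence \eqref{ses1} into the long exact sequence of $\T_\bullet^S(-,K)$. Combining Lemma \ref{gen-lem}(b)--(d) with the fact that resolutions of sums of ideals in disjoint variable sets tensor, the three projective dimensions are $\pd_S(S/Q_1)=p-\alpha+1$, $\pd_S(S/Q_2)=p-q+1$ and, crucially, $\pd_S(S/(Q_1+Q_2))=p+1$. Since $\alpha,q\ge 1$, the modules $S/Q_1$ and $S/Q_2$ have no $\T_{p+1}^S$, so the connecting homomorphism $\T_{p+1}^S(S/(Q_1+Q_2),K)\hookrightarrow\T_p^S(S/J_G,K)$ is injective and degree preserving, and its cokernel embeds into $\T_p^S(S/Q_1,K)\oplus\T_p^S(S/Q_2,K)$.

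Next I read off internal degrees in homological degree $p$. Because $S/(Q_1+Q_2)=S/((x_i,y_i:i\in A)+J_{G_A[\overline{A}]})$ and $S/Q_2=S/((x_i,y_i:i\in A)+J_{G[\overline{A}]})$, their resolutions are the Koszul complex on the $2\alpha$ linear forms $\{x_i,y_i:i\in A\}$ tensored with the resolutions of $S_A/J_{G_A[\overline{A}]}$ and $S_A/J_{G[\overline{A}]}$; only the top Koszul term can reach homological degree $p+1$ (resp.\ $p$), so $\beta_{p+1,j}^S(S/(Q_1+Q_2))=\beta_{p-2\alpha+1,\,j-2\alpha}^S(S_A/J_{G_A[\overline{A}]})$, and analogously for $Q_2$. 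By Lemma \ref{gen-lem} the graphs $G_A[\overline{A}]$, $G[\overline{A}]$, $G_A$ are generalized block graphs with strictly fewer minimal cut sets, so the inductive hypothesis applies to them. The key geometric input is that $G_A[\overline{A}]$ is again connected and indecomposable (a putative decomposition vertex of $G_A[\overline{A}]$ pulls back to one of $G$), whence by induction its top last-column degree is exactly $p(G_A[\overline{A}])+m(G_A[\overline{A}])+1=p-2\alpha+m+1$; after the $2\alpha$ Koszul shift this produces a nonzero class $\beta_{p,p+m+1}^S(S/J_G)\neq 0$ inside the image of the connecting map. The upper bound $\beta_{p,j}^S(S/J_G)=0$ for $j>p+m+1$ follows once one checks that the last columns of $S/Q_1$ and $S/Q_2$ stay in internal degrees $\le p+m+1$; by the Koszul-shift description this reduces to estimating, for $G_A$ and $G[\overline{A}]$, the combined contribution of their minimal cut sets and of their indecomposable components with at least one edge, using that deleting $A$ trades minimal cut sets for connected pieces. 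Granting these estimates, $(p,p+m+1)$ is extremal.

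For the final count assume $G$ is complete (done above) or every internal vertex satisfies $\cdeg_G(v)>2$. The latter hypothesis forces each vertex of $A$, which lies in exactly the $q+1$ facets $F_r,F_{t_1},\dots,F_{t_q}$, to obey $q+1>2$, i.e.\ $q\ge 2$; then $\pd_S(S/Q_2)=p-q+1<p$ annihilates $\T_p^S(S/Q_2,K)$ entirely, while the resolution of $S/Q_1$ contributes nothing in internal degree $p+m+1$, so the connecting map is an isomorphism there and $\beta_{p,p+m+1}^S(S/J_G)=\beta_{p+1,p+m+1}^S(S/(Q_1+Q_2))$ equals the extremal Betti number of $G_A[\overline{A}]$. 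It remains to run the combinatorial induction: the same hypothesis makes $G$ indecomposable and, by tracking clique degrees through the merge-and-delete operation, makes $G_A[\overline{A}]$ satisfy the hypothesis too; and since no free vertex of $G$ can lie in $A$ while the clique degree of every surviving vertex is preserved, one obtains $f(G_A[\overline{A}])=f(G)$, whence $\beta_{p,p+m+1}^S(S/J_G)=f(G_A[\overline{A}])-1=f(G)-1$. I expect the main obstacle to be exactly this combinatorial layer: verifying that the passage $G\rightsquigarrow G_A[\overline{A}]$ preserves indecomposability, preserves the ``$\cdeg>2$'' hypothesis, and leaves $f$ unchanged, together with the last-column estimates for $S/Q_1$ and $S/Q_2$ required in the extremality step.
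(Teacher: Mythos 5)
Your overall strategy coincides with the paper's: induction on $m(G)$ with the complete-graph/Eagon--Northcott base case, the minimal cut set $A$ attached to a leaf of $\Delta(G)$, the splitting $J_G=Q_1\cap Q_2$ and the short exact sequence \eqref{ses1}, the Koszul-shift identification $\T^S_{p(G)+1,p(G)+j}\left(S/(Q_1+Q_2),K\right)\cong \T^{S_A}_{p(G_A[\overline{A}]),p(G_A[\overline{A}])+(j-1)}\left(S_A/J_{G_A[\overline{A}]},K\right)$, injectivity of the connecting map for the nonvanishing of $\beta_{p(G),p(G)+m(G)+1}$, and the combination $q\geq 2$, $f(G)=f(G_A[\overline{A}])$ for the final count. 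However, there is a genuine gap exactly where you write ``Granting these estimates'': the vanishing $\T^S_{p(G),p(G)+j}(S/Q_i,K)=0$ for $j>m(G)+1$, $i=1,2$, is not a routine afterthought --- it is the case analysis that constitutes the bulk of the paper's proof (Cases (1), (2.1), (2.2)). Two concrete failure points. First, when $\alpha=1$ one has $\pd_S(S/Q_1)=p(G)$ exactly, so $\T_{p(G)}(S/Q_1,K)$ does not vanish wholesale; your uniform statement that $S/Q_1$ and $S/Q_2$ ``have no $\T^S_{p+1}$'' controls only homological degree $p+1$, and to bound the internal degrees in homological degree $p(G)$ you must invoke the inductive extremality statement for $G_A$ (with $m(G_A)=m(G)-1$), which you never do. Second, your claimed equality $\pd_S(S/Q_2)=p(G)-q+1$ is in general only an upper bound (minimal cut sets of $G$ need not survive in $G[\overline{A}]$; compare the inequality actually proved in Lemma \ref{gen-lem}(d)), and when $\alpha\geq 2$ indecomposability does not rule out $q=1$, in which case $\pd_S(S/Q_2)=p(G)$ is possible. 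This is the paper's Case (2.2), where one must analyze the two connected components $H_1,H_2$ of $G[\overline{A}]$, distinguish whether $H_2$ is an isolated vertex or not, and use $m(H_1)+m(H_2)+2=m(G[\overline{A}])+2\leq m(G)+1$ to force the last column of $S/Q_2$ into internal degrees at most $p(G)+m(G)+1$. Without these estimates the vanishing half of extremality --- half of the theorem --- remains unproven, as you yourself acknowledge.

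A secondary caveat: inside the $Q_2$ estimate the inductive hypothesis is applied to $G[\overline{A}]$, whose components need not be indecomposable, so one must first split them and multiply extremal corners via Proposition \ref{decomposable}; the paper notes this explicitly. On the positive side, your observation that $G_A[\overline{A}]$ is connected and indecomposable (a decomposition vertex has $\cdeg=2$ and pulls back to a decomposition vertex of $G$, since a vertex of the merged clique lies in only one of $F_r,F_{t_1},\ldots,F_{t_q}$) is correct and slightly sharper than what the paper makes explicit, and your verifications that the hypothesis $\cdeg_G(v)>2$ for internal vertices passes to $G_A[\overline{A}]$ and that $f(G)=f(G_A[\overline{A}])$ are sound; but they do not repair the missing vanishing estimates above.
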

		\begin{proof}
			We prove this assertion by induction on $m(G)$. If $m(G)=0$, then $G$ is a complete graph.
			Therefore, the claim follows by the Eagon-Northcott resolution \cite{EN}.
			Assume that  $m(G)>0$. Since  $G$ is a chordal graph, by \cite[Theorem 9.2.12]{MI},
			$\Delta(G)$ is a quasi-forest. Let $F_1,\ldots,F_r$ be a leaf order of $\mathcal{F}(\Delta(G))$.
			Let $F_{t_1},\ldots,F_{t_q}$ be all the branches of the leaf $F_r$. Note that $q\geq 1$. Since $G$ is 
			a generalized block graph,  $F_r \cap F_{t_i}=F_{t_j}\cap F_{t_k}$ for every pair of $i,j,k \in [q]$ with
			$j \neq k$ and for all $l\neq t_1,\ldots,t_q$, 
			$F_r\cap F_l=\emptyset$. Let $A =F_r \cap F_{t_1}= \cap_{i=1}^q F_{t_i} \cap F_r$  and  $\alpha =|A|$.
			Since  $A$ is a minimal cut set, by the discussion after Lemma \ref{cut-lemma1}, 
			$J_G =Q_1 \cap Q_2$, where $Q_1= J_{G_A}$ and $Q_2 =(x_i,y_i : i \in A) +J_{G[\overline{A}]}$.
			
			By Lemma \ref{gen-lem}, $G_A,\; G_A[\overline{A}]$ and $G[\overline{A}]$ are generalized block graphs.
			We have the following  cases:
			
			\textbf{Case (1):} If $\alpha =1$, then it follows from Theorem \ref{pd-gb}  
			and Lemma \ref{gen-lem} that $p(G_A)=p(G)$, $p(G_A[\overline{A}])=p(G)-1$ and 
			$p(G[\overline{A}]) \leq p(G)-q-1$(where  $G[\overline{A}]$ has $q+1$ connected components). 
			Note that $G[\overline{A}]$ is not necessarily indecomposable, but we can split it into smaller indecomposable graphs. 
			Since  $G$ is an indecomposable graph, $q \geq 2$, and hence, $\pd_S(S/Q_2) =2 +p(G[\overline{A}])\leq p(G)-1$.
			Therefore,  $$\T_{i}^S \left(\frac{S}{Q_2},K\right)=0, \text{ for } i \geq p(G).$$ Thus, for each $j\geq 0$,
			the exact sequence \eqref{ses1} 
			yields the long exact sequence of Tor sequence:
			\begin{equation}\label{Tor}
			\begin{split}
			0\rightarrow \T_{p(G)+1,p(G)+j}^S\left(\frac{S}{Q_1+Q_2},K\right)&\rightarrow
			\T_{p(G),p(G)+j}^S\left(\frac{S}{J_{G}},K\right) \rightarrow \\& \rightarrow 
			\T_{p(G),p(G)+j}^S\left(\frac{S}{J_{G_A}},K\right)\rightarrow \ldots
			\end{split}
			\end{equation}
			Since $Q_1+Q_2=(x_i,y_i : i\in A)+J_{G_A[\overline{A}]}$, we have that
			\begin{equation}\label{TorI1}
			\T^S_{p(G)+1,p(G)+j}\left(\frac{S}{Q_1+Q_2},K \right) \cong 
			\T^{S_A}_{p(G_A[\overline{A}]),p(G_A[\overline{A}])+(j-1)}\left(\frac{S_A}{J_{G_A[\overline{A}]}},K\right)
			\end{equation} where $S_A=K[x_i,y_i : i \notin A]$.
			It follows from  induction that
			\begin{equation}\label{Tor4}
			\T_{p(G_A[\overline{A}]),p(G_A[\overline{A}])+(j-1)}^{S_A}\left(\frac{S}{J_{G_A[\overline{A}]}},K\right)
			=0\quad \text{for}\quad j>m(G_A[\overline{A}])+2=m(G)+1,
			\end{equation}
			and
			\[
			\T_{p(G),p(G)+j}^S\left(\frac{S}{J_{G_A}},K\right)=0\quad \text{for}\quad j>m(G_A)+1=m(G).
			\]
			Now, \eqref{Tor}, \eqref{TorI1} and \eqref{Tor4} imply that
			\begin{equation}\label{Tor2}
			\T_{p(G),p(G)+j}^S\left(\frac{S}{J_{G}},K\right)=0 \quad \text{for}\quad j>m(G)+1,
			\end{equation} 
			and
			\begin{equation}\label{TorI2}
			\T^{S_A}_{p(G_A[\overline{A}]),p(G_A[\overline{A}])+m(G_A[\overline{A}])+1}\left(\frac{S_A}{J_{G_A[\overline{A}]}},K\right)
			\cong \T^S_{p(G),p(G)+m(G)+1}\left(\frac{S}{J_{G}},K\right).
			\end{equation}
			By induction, $\beta^{S_A}_{p(G_A[\overline{A}]),p(G_A[\overline{A}])+m(G_A[\overline{A}])+1}(S_A/J_{G_A[\overline{A}]})\neq 0$
			is an extremal Betti number. Now,   Eq.
			\eqref{TorI2} implies $$\beta^S_{p(G),p(G)+m(G)+1}(S/J_G) \neq 0,$$ and by Eq. \eqref{Tor2},
			we get that $\beta_{p(G),p(G)+m(G)+1}^S(S/J_G)$ is an extremal Betti number.
			
			\textbf{Case (2):} If $\alpha \geq 2$, then by virtue of Theorem \ref{pd-gb} and
			Lemma \ref{gen-lem}, $p(G_A)=p(G)-\alpha+1$, $p(G_A[\overline{A}])=p(G)-2\alpha+1$. Therefore, 
			\[
			\T_{i}^S\left(\frac{S}{Q_1},K\right)=
			\T_{i}^S\left(\frac{S}{J_{G_A}},K\right)=0, \text{ for } i \geq p(G).
			\] Note that $G[\overline{A}]$ has $q+1$ connected components. By  Theorem \ref{pd-gb}
			and Lemma \ref{gen-lem}, we have that $p(G[\overline{A}]) \leq p(G) -2\alpha -q+1$.
			Therefore, $\pd_S(S/Q_2) =2\alpha +p(G[\overline{A}])\leq p(G)-q+1$.
			Thus, for each $j\geq 0$, the exact sequence \eqref{ses1} yields the long exact sequence of Tor sequence: 
			\begin{equation}\label{Tor3}
			\begin{split}
			0\rightarrow \T_{p(G)+1,p(G)+j}^S\left(\frac{S}{Q_1+Q_2},K\right)&\rightarrow
			\T_{p(G),p(G)+j}^S\left(\frac{S}{J_{G}},K\right) \rightarrow\\& \rightarrow
			\T_{p(G),p(G)+j}^S\left(\frac{S}{Q_2},K\right)\rightarrow \ldots
			\end{split}            
			\end{equation}
			We now distinguish between two sub-cases. 
			
			{\bf Case (2.1):}
			If $\pd_S(S/Q_2) =2\alpha +p(G[\overline{A}])\leq p(G)-1$, then  $$\T_{p(G)}^S \left(\frac{S}{Q_2},K\right)=0.$$ 
			For each $j\geq 0$, \eqref{Tor3} yields that 
			\begin{equation}\label{TorIso1}
			\T^S_{p(G)+1,p(G)+j}\left(\frac{S}{Q_1+Q_2},K \right) \cong  \T^{S}_{p(G),p(G)+j}\left(\frac{S}{J_G},K\right).
			\end{equation}
			Now,  Eqs. \eqref{TorI1}, \eqref{Tor4} and \eqref{TorIso1} imply  
			\begin{equation}\label{Tor5}
			\T_{p(G),p(G)+j}^S\left(\frac{S}{J_{G}},K\right)=0\quad \text{for}\quad j>m(G)+1,
			\end{equation} and
			\begin{equation}\label{TorIso2}
			\T^{S_A}_{p(G_A[\overline{A}]),p(G_A[\overline{A}])+m(G_A[\overline{A}])+1}\left(\frac{S_A}{J_{G_A[\overline{A}]}},K\right)
			\cong \T^S_{p(G),p(G)+m(G)+1}\left(\frac{S}{J_{G}},K\right).
			\end{equation}
			By induction, $\beta^{S_A}_{p(G_A[\overline{A}]),p(G_A[\overline{A}])+m(G_A[\overline{A}])+1}(S_A/J_{G_A[\overline{A}]})\neq 0$ 
			is an extremal Betti number. 
			Hence, Eq.  \eqref{TorIso2} implies $$\beta^S_{p(G),p(G)+m(G)+1}(S/J_G) \neq 0,$$ and by Eq. \eqref{Tor5},
			we get that $\beta_{p(G),p(G)+m(G)+1}^S(S/J_G)$ is an extremal Betti number.

			{\bf Case (2.2):}
			If $\pd_S(S/Q_2)=p(G)$, then $q=1$. Let $H_1$ and $H_2$ be connected components
			of $G[\overline{A}]$. Then,  $m(G[\overline{A}])=m(H_1)+m(H_2)$. 
			For $i=1,2$, set $S_{H_i}=K[x_j,y_j : j \in V(H_i)]$. If $H_2$ is an isolated vertex, then
			$\pd_S(S/Q_2)=2\alpha +p(H_1)=2\alpha+p(G[\overline{A}])$ 
			and $m(H_1)=m(G[\overline{A}]) \leq m(G)-1$. 
			If $H_2$ is a non-trivial graph, then $\pd_S(S/Q_2)=2\alpha+p(H_1)+p(H_2)=2\alpha+p(G[\overline{A}])$ 
			and $m(H_1)+m(H_2)+2=m(G[\overline{A}])+2 \leq m(G)+1$.
			By induction, 
			\[
			\T_{p(G[\overline{A}]),p(G[\overline{A}])+j}^{S_A}\left(\frac{S}{J_{G[\overline{A}]}},K\right)=
			0\quad \text{for}\quad j>m(G)+1\geq m(G[\overline{A}])+2. 
			\] 
			Thus, $\T_{p(G),p(G)+j}^{S}\left(\frac{S}{Q_2},K\right)=0, \text{ for } j>m(G)+1.$
			Now,  Eqs. \eqref{TorI1}, \eqref{Tor4} and \eqref{Tor3} imply 
			\begin{equation}\label{Tor6}
			\T_{p(G),p(G)+j}^S\left(\frac{S}{J_{G}},K\right)=0\quad \text{for}\quad j>m(G)+1.
			\end{equation}
			By induction, $\beta^{S_A}_{p(G_A[\overline{A}]),p(G_A[\overline{A}])+m(G_A[\overline{A}])+1}(S_A/J_{G_A[\overline{A}]})\neq 0$.
			Consequently, by  Eq. \eqref{Tor3}, $$\beta^S_{p(G),p(G)+m(G)+1}(S/J_G) \neq 0,$$ and together with Eq. \eqref{Tor6}, we get
			that $\beta_{p(G),p(G)+m(G)+1}^S(S/J_G)$ is an extremal Betti number. 
			
			If $G$ is a complete graph, then $p(G)=n-1$ and $m(G)=0$. It follows from \cite[Corollary 4.3]{HKS}
			that $\beta_{n-1,n}^S(S/J_G)=n-1=f(G)-1$.
			We now assume that for every internal vertex $v$, $\cdeg_G(v)>2$. Therefore, $q\geq 2$, 
			and as before we conclude that 
			\[\T^{S_A}_{p(G_A[\overline{A}]),p(G_A[\overline{A}])+m(G_A[\overline{A}])+1}\left(\frac{S_A}{J_{G_A[\overline{A}]}},K\right)
			\cong \T^S_{p(G),p(G)+m(G)+1}\left(\frac{S}{J_{G}},K\right).\]
			Now, by induction, $\beta^{S_A}_{p(G_A[\overline{A}]),p(G_A[\overline{A}])+m(G_A[\overline{A}])+1}(S_A/J_{G_A[\overline{A}]})
			=f(G_A[\overline{A}])-1$. Since 
			$f(G)=f(G_A[\overline{A}])$, we conclude that $\beta^S_{p(G),p(G)+m(G)+1}(S/J_G) =f(G)-1.$
		\end{proof}
		
		\begin{corollary}\label{Lem:Extremal}
			Let $G$ be a generalized block graph for which $G=G_1\cup \cdots \cup G_{s}$ is the decomposition of
			$G$ into indecomposable graphs. 
			Then,   $\beta_{p(G),p(G)+m(G)+1}^S(S/J_G)$ is an extremal Betti number of $S/J_G$.
		\end{corollary}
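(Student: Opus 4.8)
The plan is to reduce the decomposable case to the indecomposable case settled in Theorem \ref{main}, using the multiplicativity of Betti polynomials from Proposition \ref{decomposable}. Since $G = G_1 \cup \cdots \cup G_s$ is the decomposition into indecomposable subgraphs and $G$ is connected, each $G_i$ is a connected indecomposable graph; being an induced subgraph of a generalized block graph, each $G_i$ is again a generalized block graph. Hence Theorem \ref{main} applies to every $G_i$ and produces an extremal Betti number $\beta^{S_i}_{p(G_i),\,p(G_i)+m(G_i)+1}(S_i/J_{G_i})$. Setting $j_i = m(G_i)+1$, the consequence of Proposition \ref{decomposable} recorded just after its statement shows that
\[
\beta^{S}_{p(G),\,p(G)+j}(S/J_G) = \prod_{i=1}^{s} \beta^{S_i}_{p(G_i),\,p(G_i)+j_i}(S_i/J_{G_i})
\]
is an extremal Betti number of $S/J_G$, where $j = \sum_{i=1}^{s} j_i = \sum_{i=1}^{s} m(G_i) + s$.

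It therefore only remains to identify $j$ with $m(G)+1$, i.e.\ to establish the combinatorial identity
\[
m(G) = \sum_{i=1}^{s} m(G_i) + (s-1).
\]
I would prove this by induction on $s$, the case $s=1$ being vacuous. For $s>1$, peel off one indecomposable piece: write $G = G' \cup G_s$ with $G' = G_1 \cup \cdots \cup G_{s-1}$ and $V(G') \cap V(G_s) = \{v\}$, where $v$ is a free vertex of both $G'$ and $G_s$. Since $v$ lies in a unique maximal clique $C'$ of $G'$ and a unique maximal clique $C_s$ of $G_s$, and since gluing at a single vertex creates no new cliques, the facets of $G$ are the disjoint union of those of $G'$ and those of $G_s$, with $C'$ and $C_s$ the two distinct maximal cliques of $G$ meeting in $C' \cap C_s = \{v\}$. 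Thus $v$ becomes an internal vertex of $G$, and $\{v\}$ is a minimal cut set of $G$ that occurs in neither $G'$ nor $G_s$.

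The heart of the argument is to check that, apart from this new singleton, the minimal cut sets of $G$ are exactly the disjoint union of those of $G'$ and those of $G_s$. Using the facet-intersection description of minimal cut sets recalled before Lemma \ref{cut-lemma1}, any minimal cut set $A \neq \{v\}$ of $G$ is cut out by facets lying entirely on one side: if both a facet of $G'$ and a facet of $G_s$ occurred among the defining facets, then $A \subseteq C' \cap C_s = \{v\}$, forcing $A = \{v\}$. Conversely, a minimal cut set of $G'$ avoids $v$ (a free vertex cannot lie in an intersection of two or more facets) and is disjoint from $V(G_s)\setminus\{v\}$, so it survives as a minimal cut set of $G$; symmetrically for $G_s$. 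This yields $m(G) = m(G') + m(G_s) + 1$, and combining with the inductive hypothesis $m(G') = \sum_{i=1}^{s-1} m(G_i) + (s-2)$ gives the claimed identity. Substituting back shows $j = m(G)+1$, completing the proof. I expect the main obstacle to be precisely this last bookkeeping: verifying that each gluing introduces exactly one new singleton minimal cut set and preserves all the others, where the generalized-block-graph hypothesis controlling triple facet intersections is exactly what guarantees that $v$ belongs to no minimal cut set other than $\{v\}$.
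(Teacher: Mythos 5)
Your proposal is correct and takes essentially the same approach as the paper: the paper's proof is a one-liner that asserts the identity $m(G)=m(G_1)+\cdots+m(G_s)+s-1$ and then invokes Proposition \ref{decomposable} and Theorem \ref{main}, exactly as you do. The only difference is that you actually prove that identity (one new singleton minimal cut set per gluing vertex, all other minimal cut sets preserved, via the facet-intersection characterization), a detail the paper states without proof; your verification of it is sound, granting the implicit assumption, used by the paper as well, that $G$ is connected.
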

		\begin{proof}
			Note that $m(G)=m(G_1)+\cdots + m(G_s)+s-1$. Now, the assertion follows from Proposition \ref{decomposable}
			and Theorem \ref{main}.
		\end{proof}    
		As of now, the only lower bound  known for regularity of binomial edge ideals of generalized block graphs
		is $\ell(G)$, which is a general lower bound given by 
		Matsuda and Murai. If $H$ is a longest induced path of a generalized block graph $G$, 
		then $\ell(G)=\ell(H)=m(H)+1 \leq m(G)+1$. Thus, as an immediate consequence of Theorem
		\ref{main}, we obtain an improved lower bound for the regularity of binomial edge ideals of generalized block graphs.
		\begin{corollary}\label{reg-lb}
			Let $G$ be a generalized block graph on $[n]$ with $c_G$ connected
			components. Then,  $\reg(S/J_G)\geq m(G)+c_G$.
		\end{corollary}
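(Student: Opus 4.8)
The plan is to reduce to the connected case and then invoke Corollary~\ref{Lem:Extremal}. Write $G = G_1 \sqcup \cdots \sqcup G_{c_G}$ as the disjoint union of its connected components, and set $S_i = K[x_j, y_j : j \in V(G_i)]$. Since the components occupy pairwise disjoint sets of variables, we have $J_G = J_{G_1} + \cdots + J_{G_{c_G}}$ inside $S$, and by the tensor-product formula for the minimal free resolution of a sum of ideals supported on disjoint variables (recalled in Section~3 for $T/(I+J)$), the minimal free resolution of $S/J_G$ is the tensor product of those of the $S_i/J_{G_i}$. Consequently the Betti polynomial factors, $B_{S/J_G}(s,t) = \prod_{i=1}^{c_G} B_{S_i/J_{G_i}}(s,t)$, and in particular regularity is additive: $\reg(S/J_G) = \sum_{i=1}^{c_G} \reg(S_i/J_{G_i})$. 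Likewise, since every minimal cut set of $G$ lies in a single connected component and each $G_i$ is again a generalized block graph, the number of minimal cut sets is additive: $m(G) = \sum_{i=1}^{c_G} m(G_i)$.

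Next I would treat each connected component. Fix $i$; then $G_i$ is a connected generalized block graph, so it decomposes into indecomposable induced subgraphs and Corollary~\ref{Lem:Extremal} applies, giving that $\beta_{p(G_i),\, p(G_i)+m(G_i)+1}^{S_i}(S_i/J_{G_i})$ is an extremal Betti number of $S_i/J_{G_i}$. By definition an extremal Betti number is nonzero, so this witnesses a nonvanishing entry in homological degree $p(G_i)$ and internal degree $p(G_i)+m(G_i)+1$. Reading off the regularity gives $\reg(S_i/J_{G_i}) \geq \big(p(G_i)+m(G_i)+1\big) - p(G_i) = m(G_i)+1$.

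Finally I would combine the two steps. Summing the per-component lower bound over all $c_G$ components and using additivity of both regularity and the number of minimal cut sets,
\[
\reg(S/J_G) = \sum_{i=1}^{c_G} \reg(S_i/J_{G_i}) \ \geq\ \sum_{i=1}^{c_G}\big(m(G_i)+1\big) \ =\ m(G) + c_G,
\]
which is the desired inequality. The substantive content is already contained in Theorem~\ref{main} and Corollary~\ref{Lem:Extremal}; the only point requiring care is the passage from the connected case to arbitrary generalized block graphs, since Corollary~\ref{Lem:Extremal}—through the identity $m(G) = \sum m(G_i) + (s-1)$ used in its proof—is really phrased for connected graphs. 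The disjoint-union reduction above is precisely what licenses applying it component by component, and verifying the additivity of $\reg$ and of $m(\,\cdot\,)$ over connected components is the one small technical point I expect to be the only genuine obstacle.
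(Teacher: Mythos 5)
Your proposal is correct and follows essentially the same route as the paper: reduce to connected components via the tensor-product structure of the minimal free resolution (giving additivity of $\reg$ and of $m(\cdot)$), then apply Corollary~\ref{Lem:Extremal} to each component to extract the per-component bound $\reg(S_i/J_{G_i}) \geq m(G_i)+1$ from the nonvanishing extremal Betti number. Your closing remark about Corollary~\ref{Lem:Extremal} being phrased for connected graphs is a fair point of care, and the paper handles it exactly as you do, by splitting into components before invoking it.
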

		\begin{proof}
			Let $G_1,\ldots,G_{c_G}$ be connected components of $G$. For $1 \leq i \leq c_G$, set  $S_i=K[x_j,y_j : j \in V(G_i)]$.
			Then, for $1\leq i \leq c_G$, by Corollary \ref{Lem:Extremal}, $\reg(S_i/J_{G_i}) \geq m(G_i)+1$. Note 
			that $m(G)=m(G_1)+\cdots+m(G_{c_G})$ and $S/J_G \simeq S_1/J_{G_1}\otimes \cdots \otimes S_{c_G}/J_{G_{c_G}}$. 
			Thus, the minimal graded free resolution of $S/J_G$ is the tensor product of the minimal free resolutions
			of $ S_1/J_{G_1}, \ldots,S_{c_G}/J_{G_{c_G}}$.  Hence, $\reg(S/J_G)=\sum_{i=1}^{c_G} \reg(S_i/J_{G_i}) \geq m(G)+c_G$.
		\end{proof}
		
		We now give an example of a connected chordal graph $G$ that is not a generalized block graph     
		for which $\reg(S/J_G) <m(G)+1$.     \begin{example}
			Let $G$ be a graph as shown in Fig. \ref{cho}. Then, it can be seen that $G$ is a chordal graph 
			that is not a generalized block graph. The minimal cut sets of $G$ are $\{2,3\}, \{2,5\}, \{3,5\}$. 
			Therefore, $m(G)=3$. Using Macaulay2 \cite{M2}, it can be seen that $\reg(S/J_G)=3<m(G)+1=4$.
			
			\begin{figure}[h]
				\begin{center}
					\begin{tikzpicture}[scale=.5]
					\draw  (0,2)-- (2,0);
					\draw  (2,0)-- (4,2);
					\draw  (4,2)-- (4,0);
					\draw  (4,0)-- (2,0);
					\draw  (2,0)-- (0,0);
					\draw  (0,0)-- (0,2);
					\draw  (0,2)-- (4,2);
					\draw  (4,2)-- (2,3);
					\draw (2,3)-- (0,2);
					\begin{scriptsize}
					\draw (0,2) circle (1.5pt);
					\draw (0,2.3) node {$2$};
					\draw (2,0) circle (1.5pt);
					\draw (2.0,-0.3) node {$5$};
					\draw (4,2) circle (1.5pt);
					\draw (4,2.3) node {$3$};
					\draw (4,0) circle (1.5pt);
					\draw (4,-0.3) node {$6$};
					\draw (0,0) circle (1.5pt);
					\draw (0,-0.3) node {$4$};
					\draw (2,3) circle (1.5pt);
					\draw (2.0,3.3) node {$1$};
					\end{scriptsize}
					\end{tikzpicture}
					\caption{}\label{cho}
				\end{center}
			\end{figure}    
		\end{example}
		Our aim is to classify  generalized block graphs whose binomial edge ideals admit 
		a unique extremal Betti number. Equivalently, we want to classify
		the generalized block graphs
		$G$ for which $\reg(S/J_G)=m(G)+1$. For that recall the definition of
		flower graph, introduced by Mascia and Rinaldo in \cite{CarlaR2018}: a \textit{flower} graph $F_{h,k}(v)$ is a connected 
		graph obtained  by gluing 
		each of $h$ copies of the complete graph $K_3$ and $k$ copies of the star graph $K_{1,3}$ at
		a common vertex $v$, that is free in each of them. 
		Now, we characterize generalized block graphs whose binomial edge ideals admit   a unique extremal Betti number.

		We denote by $\iv(G)$ the number of internal vertices of $G$.
		\begin{theorem}\label{unique}
			Let $G$ be a connected indecomposable generalized block graph. Then,  the following  are equivalent:
			\begin{enumerate}
				\item $S/J_G$ admits   a unique extremal Betti number.
				\item For any $v \in V(G)$, $F_{h,k}(v)$ is not an induced subgraph of $G$ for
				every $h, k \geq 0$ with $h + k \geq 3$.
			\end{enumerate}
			In this case, $\reg(S/J_G ) = m(G) + 1$.
		\end{theorem}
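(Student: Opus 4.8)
The plan is to recast the statement about extremal Betti numbers as a statement about regularity. Since $G$ is connected, Corollary \ref{reg-lb} gives $\reg(S/J_G)\ge m(G)+1$, while Theorem \ref{main} exhibits a nonzero extremal Betti number $\beta^S_{p(G),p(G)+m(G)+1}(S/J_G)$ sitting in the top homological degree $p(G)=\pd_S(S/J_G)$. Using the criterion recalled in the introduction, that $S/J_G$ has a unique extremal Betti number exactly when $\beta^S_{p,p+r}(S/J_G)\ne 0$ for $p=\pd_S(S/J_G)$ and $r=\reg(S/J_G)$, I would argue that (1) holds if and only if $\reg(S/J_G)=m(G)+1$: if $\reg(S/J_G)=m(G)+1$ the Betti number from Theorem \ref{main} is the unique extremal one, and conversely uniqueness forces $r=m(G)+1$ by comparing positions. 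As the lower bound is automatic, (1) is equivalent to $\reg(S/J_G)\le m(G)+1$, and the theorem reduces to showing that $\reg(S/J_G)\le m(G)+1$ if and only if $G$ has no induced flower $F_{h,k}(v)$ with $h+k\ge 3$. This equivalence is what I would prove by induction on $m(G)$, the base case $m(G)=0$ (so $G$ is complete, $\reg=1$, no flower) being immediate.

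For the inductive step I would take a minimal cut set $A$ (for the ``no flower $\Rightarrow$ bound'' direction the socket of the last leaf, with $q$ branches), use the short exact sequence \eqref{ses1} with $Q_1=J_{G_A}$, $Q_2=(x_i,y_i:i\in A)+J_{G[\overline A]}$ and $Q_1+Q_2=(x_i,y_i:i\in A)+J_{G_A[\overline A]}$, and feed it into the standard regularity estimate
\[
\reg\Big(\frac{S}{J_G}\Big)\le \max\Big\{\reg\Big(\frac{S}{Q_1}\Big),\ \reg\Big(\frac{S}{Q_2}\Big),\ \reg\Big(\frac{S}{Q_1+Q_2}\Big)+1\Big\}.
\]
By Lemma \ref{gen-lem}, $m(G_A)=m(G_A[\overline A])=m(G)-1$, and since completing a clique or passing to an induced subgraph cannot create a new induced flower, $G_A$, $G_A[\overline A]$ and every component of $G[\overline A]$ inherit flower-freeness. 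Thus by induction (treating decomposable pieces through Proposition \ref{decomposable}) the first and third terms are at most $m(G)$ and $m(G)+1$. Everything then reduces to the middle term $\reg(S/Q_2)=\reg(S/J_{G[\overline A]})=\sum_i\reg(S/J_{C_i})$ over the components $C_i$ of $G[\overline A]$: a nontrivial flower-free component contributes $m(C_i)+1$ and an isolated vertex contributes $0$, so, writing $t$ for the number of components containing an edge and $d=m(G)-m(G[\overline A])$ for the number of minimal cut sets destroyed by removing $A$, one gets $\reg(S/Q_2)=m(G[\overline A])+t$ and the required bound $\reg(S/Q_2)\le m(G)+1$ becomes the purely combinatorial inequality $t\le d+1$.

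I would then show that this inequality is exactly detected by the flower condition. Using that distinct minimal cut sets of a generalized block graph are disjoint (Lemma \ref{cut-lemma1}), one tracks which cut sets disappear when $A$ is deleted and checks that flower-freeness keeps $t\le d+1$ at the chosen reduction. For the converse, if $F_{h,k}(v)$ with $h+k\ge 3$ is induced, one reduces at (a cut set refining) the center $v$: each of the $\ge 3$ petals survives the deletion as a nontrivial component keeping its own cut structure (a triangle petal leaves an edge, a star petal leaves a $P_3$ retaining its cut vertex), so $t\ge 3$ while only $\{v\}$ is destroyed, forcing $t>d+1$ and $\reg(S/Q_2)\ge m(G)+2$. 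To push this excess up to $\reg(S/J_G)$ I would invoke the complementary inequalities $\reg(S/Q_1\oplus S/Q_2)\le\max\{\reg(S/J_G),\reg(S/(Q_1+Q_2))\}$ and $\reg(S/(Q_1+Q_2))\le\max\{\reg(S/J_G)-1,\reg(S/Q_1\oplus S/Q_2)\}$, comparing the offending term against $\reg(S/(Q_1+Q_2))=m(G)$ to conclude $\reg(S/J_G)\ge m(G)+2$, i.e.\ (1) fails; when instead the flower persists into a smaller subgraph after a leaf deletion away from it, I would close by induction.

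The main obstacle is precisely the combinatorial inequality $t\le d+1$ together with its sharp failure on flowers, and the non-cancellation needed in the reverse direction. On the upper-bound side one must verify that for a flower-free $G$ the reducing cut set can always be chosen (at the last leaf, and in the case $\alpha=|A|\ge 2$ where a large junction can never be a flower and deleting $A$ destroys enough cut sets) so that no hidden excess branch arises and $t\le d+1$ holds; this is the delicate bookkeeping of surviving versus collapsing petals. On the lower-bound side the subtlety is that an induced flower in $G$ need not become a union of separate components after deleting $v$, so one cannot read the excess off directly and must instead either locate it at a forced reduction or propagate it by the long exact sequence, checking each time that $\reg(S/Q_2)$ (or $\reg(S/(Q_1+Q_2))$) strictly dominates the remaining terms so that the extra degree is not absorbed.
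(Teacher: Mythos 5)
Your reduction of (1) to the equality $\reg(S/J_G)=m(G)+1$ via Theorem \ref{main} and Corollary \ref{reg-lb} is exactly the paper's (correct) first step, and your treatment of $(2)\Rightarrow(1)$ follows the paper's route as well: induction on $m(G)$, the exact sequence \eqref{ses1} at the minimal cut set $A$ of the last leaf, Lemma \ref{gen-lem}, and the regularity estimate for short exact sequences \cite[Corollary 18.7]{peeva}. But the one genuinely combinatorial step there --- your inequality $t\le d+1$ --- is precisely what you defer as ``delicate bookkeeping,'' whereas it is the actual content of this direction: the paper shows that flower-freeness forces at least $q-1$ of the $q+1$ components of $G[\overline{A}]$ to be isolated vertices, the two remaining ones being a clique and a generalized block graph $H_1$ satisfying the hypothesis with $m(H_1)\le m(G)-1$, whence $\reg(S/J_{G[\overline{A}]})\le m(H_1)+2\le m(G)+1$. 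Leaving that step unproved is a gap, not a detail.

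The implication $(1)\Rightarrow(2)$ is where your route genuinely diverges from the paper and where it would fail. The paper does not touch the exact sequence at all here: it thins every minimal cut set of size at least $2$ down to a single vertex, producing an induced block graph $H$ with $\iv(H)=m(G)$ (using \cite[Proposition 2.1]{Rauf}) that still contains the flower, applies the block-graph theorem of Herzog and Rinaldo \cite[Theorem 8]{her2} to get $\reg(S/J_H)>\iv(H)+1$, and concludes by induced-subgraph monotonicity of regularity \cite[Corollary 2.2]{MM}; monotonicity yields the lower bound with no cancellation issues whatsoever. Your plan instead rests on three unproven claims, at least one of which is false as stated: (i) ``only $\{v\}$ is destroyed,'' i.e.\ $d=1$, fails in general, since removing a minimal cut set $A$ can destroy other minimal cut sets $B$ disjoint from $A$ whenever a facet contains $A\cup B$ (already in the path $1$--$2$--$3$--$4$, deleting the cut vertex $2$ also destroys the cut vertex $3$), so $t>d+1$ does not follow merely from $t\ge 3$; (ii) that the petals of an induced flower survive in distinct nontrivial components of $G[\overline{A}]$ is plausible for generalized block graphs but requires an argument beyond the disjointness statement of Lemma \ref{cut-lemma1}; (iii) most seriously, to propagate $\reg(S/Q_2)\ge m(G)+2$ to $S/J_G$ you need the strict inequality $\reg(S/(Q_1+Q_2))<\reg(S/Q_2)$, and your assertion $\reg(S/(Q_1+Q_2))=m(G)$ is only available when $G_A[\overline{A}]$ is itself flower-free, which you cannot assume when $G$ contains a flower; your fallback of ``closing by induction'' is not set up, because what must be propagated is a lower bound, and the long exact sequence controls it only under exactly the strict domination you have not established. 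So the upper-bound half of your proposal parallels the paper modulo the deferred combinatorial lemma, but the lower-bound half is a different and broken approach; replacing it by the paper's thinning-plus-monotonicity argument is the fix.
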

		\begin{proof}
			$(1) \implies (2):$ Suppose that for some $v \in V(G)$ and  $h,k \geq 0$ with $h+k \geq 3$,
			$F_{h,k}(v)$ is an induced subgraph of $G$. It 
			is enough to  prove  $\reg(S/J_G)>m(G)+1$. Let $H$ be an induced subgraph of $G$ obtained in the following way: 
			for every minimal cut set $A$ with $|A| \geq 2$, remove $|A|-1$ elements of $A$ from $G$. Note that $H$ is a block graph
			with  $\iv(H)=m(G)$ by \cite[Proposition 2.1]{Rauf} and $F_{h,k}(v)$ is an induced subgraph of $H$. 
			It follows from \cite[Theorem 8]{her2} that $\reg(S/J_H) >\iv(H)+1$. 
			Now, by virtue of \cite[Corollary 2.2]{MM}, $\reg(S/J_G)\geq \reg(S/J_H)>m(G)+1$. 
			
			$(2)\implies (1):$ By  Corollary \ref{reg-lb}, it is enough to prove  $\reg (S/J_G) \leq m(G)+1$.
			We prove this by induction
			on $m(G)$. If $m(G)=0$, then $G$ is a complete graph and the assertion is obvious. Assume 
			that $m(G) >0$. Let $F_1,\ldots, F_r$ 
			be a leaf order of $\mathcal{F}(\Delta(G))$.  Let $A$ be the minimal cut set defined in the
			proof of  Theorem \ref{main}. Then $G_A$, $G_A[\overline{A}]$
			and $G[\overline{A}]$ are  generalized block graphs.  Note that $G_A$ and $G_A[\overline{A}]$  are 
			generalized block graphs satisfying the hypothesis
			with $m(G_A)=m(G_A[\overline{A}])=m(G)-1$.  By induction, we have $\reg(S/J_{G_A})=\reg(S/J_{G_A[\overline{A}]})\leq m(G)$.
			As  in the proof
			of Theorem \ref{main}, $G[\overline{A}]$ has $q+1$ connected components, say  $H_1,\ldots,H_{q+1}$.
			Since  $G$ has no induced $F_{h,k}$ with $h+k \geq 3$, at least  $q-1$ components are isolated vertices.
			The two remaining components are a clique and a generalized block graph, say $H_1$, satisfying the assumption 
			with  $m(H_1) \leq m(G)-1$. Applying  induction  we
			obtain that $\reg(S/J_{G[\overline{A}]})\leq m(H_1)+2\leq m(G)+1$. Now, the assertion follows from the
			exact sequence (\ref{ses1}) and \cite[Corollary 18.7]{peeva}.
		\end{proof}
		As an immediate consequence of Proposition \ref{decomposable} and Theorem \ref{unique}, we have the following results:
		\begin{corollary}\label{least-reg}
			Let $G$ be a connected generalized block graph for which $G=G_1\cup \cdots \cup G_r$ is the decomposition
			of $G$ into indecomposable graphs. 
			Then,  $S/J_G$ admits   a unique extremal Betti number if and only if for each $i$, $S_i/J_{G_i}$ admits
			a unique extremal Betti number.
			Moreover, in this case $\reg(S/J_G) = m(G)+1$.
		\end{corollary}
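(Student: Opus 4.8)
The plan is to deduce everything from the multiplicativity of the Betti polynomial recorded in Proposition \ref{decomposable}. First I would iterate that proposition over the decomposition $G = G_1 \cup \cdots \cup G_r$ to obtain
\[
B_{S/J_G}(s,t) = \prod_{i=1}^{r} B_{S_i/J_{G_i}}(s,t),
\]
which is equivalent to the convolution identity $\beta_{a,b}^S(S/J_G) = \sum \prod_{k} \beta_{a_k,b_k}^{S_k}(S_k/J_{G_k})$, the sum ranging over all tuples with $\sum_k a_k = a$ and $\sum_k b_k = b$. Alongside this I would recall the additivity $p(G) = \sum_i p(G_i)$ and $\reg(S/J_G) = \sum_i \reg(S_i/J_{G_i})$ already noted after Proposition \ref{decomposable}.

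The heart of the argument is to evaluate the top-corner Betti number $\beta_{p(G),\, p(G)+\reg(S/J_G)}^S(S/J_G)$ through this convolution and to show that exactly one term survives. Since $\beta_{a_k, b_k}^{S_k}=0$ whenever $a_k > p(G_k)$ and $\sum_k p(G_k) = p(G)$, the homological constraint $\sum_k a_k = p(G)$ forces $a_k = p(G_k)$ for every $k$. Feeding this back, the bound $b_k - a_k \le \reg(S_k/J_{G_k})$ together with $\sum_k \reg(S_k/J_{G_k}) = \reg(S/J_G)$ and $\sum_k b_k = p(G)+\reg(S/J_G)$ forces $b_k = p(G_k)+\reg(S_k/J_{G_k})$ for every $k$. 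Hence
\[
\beta_{p(G),\, p(G)+\reg(S/J_G)}^S(S/J_G) = \prod_{k=1}^{r} \beta_{p(G_k),\, p(G_k)+\reg(S_k/J_{G_k})}^{S_k}(S_k/J_{G_k}).
\]

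With this factorization in hand, I would invoke the observation from the introduction that a graded module admits a unique extremal Betti number precisely when its top-corner number $\beta_{p,p+\reg}$ is nonzero. The displayed product is nonzero if and only if each of its factors is nonzero, i.e.\ if and only if each $S_i/J_{G_i}$ admits a unique extremal Betti number; this settles the asserted equivalence in both directions at once. The implication from the factors back to $G$ can alternatively be read off directly from the product-of-extremals remark following Proposition \ref{decomposable}. For the final clause, assume the equivalent conditions hold. Each $G_i$ is a connected indecomposable generalized block graph satisfying condition (2) of Theorem \ref{unique}, so $\reg(S_i/J_{G_i}) = m(G_i)+1$. Summing and using additivity of regularity together with the identity $m(G) = \sum_{i=1}^r m(G_i) + (r-1)$ --- the $r-1$ free gluing vertices of the decomposition being exactly the size-one minimal cut sets contributed, as already used in Corollary \ref{Lem:Extremal} --- yields $\reg(S/J_G) = \sum_i (m(G_i)+1) = m(G)+1$. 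I do not expect a genuine obstacle here: the only points requiring care are the bookkeeping in the convolution, namely verifying that no off-diagonal tuple contributes to the top corner, and confirming the additive formula for $m(G)$ across the decomposition.
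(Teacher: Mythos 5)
Your proposal is correct and follows essentially the same route as the paper, which states this corollary without a written proof as an immediate consequence of Proposition \ref{decomposable} and Theorem \ref{unique}: multiplicativity of the Betti polynomial gives the equivalence via the top-corner Betti number $\beta_{p(G),p(G)+\reg(S/J_G)}^S(S/J_G)$, and Theorem \ref{unique} together with $m(G)=\sum_i m(G_i)+(r-1)$ (as used in Corollary \ref{Lem:Extremal}) gives the regularity formula. Your convolution argument showing that only the diagonal tuple $(a_k,b_k)=(p(G_k),p(G_k)+\reg(S_k/J_{G_k}))$ survives simply makes explicit the bookkeeping the paper leaves implicit, and is carried out correctly.
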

		Recall that a {\it caterpillar} is a tree in which the removal of all pendant vertices leaves a path graph.
		\begin{corollary}\label{extremal-tree}
			Let $T$ be an indecomposable tree on $[n]$. Then,  $S/J_T$ admits   a unique extremal Betti
			number if and only if $T$ is a caterpillar.
		\end{corollary}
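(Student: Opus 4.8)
The plan is to deduce this directly from the flower characterization of Theorem \ref{unique}, once I understand what an induced flower can look like inside a tree. Since a tree is a block graph, it is a generalized block graph, and by hypothesis $T$ is connected and indecomposable, so Theorem \ref{unique} applies: $S/J_T$ admits a unique extremal Betti number if and only if $T$ contains no induced $F_{h,k}(v)$ with $h+k\geq 3$. As $T$ is triangle-free, no induced subgraph of $T$ contains a $K_3$, so every induced flower of $T$ has $h=0$; moreover $F_{0,3}(v)$ is an induced subgraph of $F_{0,k}(v)$ for every $k\geq 3$, so an induced $F_{0,k}(v)$ forces an induced $F_{0,3}(v)$. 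Hence the criterion collapses to: $S/J_T$ has a unique extremal Betti number if and only if $T$ has no induced subgraph isomorphic to $F_{0,3}(v)$. It therefore suffices to prove that, for an indecomposable tree $T$, having no induced $F_{0,3}(v)$ is equivalent to being a caterpillar. Throughout I write $T^{\circ}$ for the tree obtained from $T$ by deleting all its pendant vertices, so that by definition $T$ is a caterpillar precisely when $T^{\circ}$ is a path.

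For one implication, suppose $T$ contains an induced $F_{0,3}(v)$, with $v$ adjacent to the three claw centers $c_1,c_2,c_3$, each $c_i$ being adjacent to two further leaves of the flower. Then $\deg_T(v)\geq 3$ and $\deg_T(c_i)\geq 3$, so $v$ and $c_1,c_2,c_3$ are internal vertices of $T$ and survive in $T^{\circ}$ together with the three edges $vc_i$. Thus $\deg_{T^{\circ}}(v)\geq 3$, so $T^{\circ}$ is not a path and $T$ is not a caterpillar. Contrapositively, if $T$ is a caterpillar then $T$ has no induced $F_{0,3}(v)$; note this direction does not use indecomposability.

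The converse is the crux, and here indecomposability enters. First I would record that an indecomposable tree has no vertex of degree $2$: if $w$ had degree $2$ with neighbours $a,b$, then splitting $T$ at $w$ into the two induced subtrees on either side—each containing $w$ as a leaf, hence as a free vertex—would exhibit $T$ as decomposable in the sense used before Proposition \ref{decomposable}. Now assume $T$ is not a caterpillar, so $T^{\circ}$ is a tree that is not a path and therefore has a vertex $c$ with $\deg_{T^{\circ}}(c)\geq 3$; let $a_1,a_2,a_3$ be three of its neighbours in $T^{\circ}$, which are internal vertices of $T$. Each $a_i$ is internal, so $\deg_T(a_i)\geq 2$, and by the previous observation in fact $\deg_T(a_i)\geq 3$; thus $a_i$ has two neighbours $b_i,b_i'$ distinct from $c$. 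Taking $v=c$, the ten vertices $c,a_1,a_2,a_3,b_1,b_1',b_2,b_2',b_3,b_3'$ are pairwise distinct, since the sets $\{a_i,b_i,b_i'\}$ lie in three distinct components of $T\setminus c$, and because $T$ has no cycles they induce exactly the nine edges $ca_i,\,a_ib_i,\,a_ib_i'$, that is, a copy of $F_{0,3}(c)$. Hence $T$ contains an induced $F_{0,3}(v)$.

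Combining the two implications yields the equivalence, and the final equality $\reg(S/J_T)=m(T)+1$ in the caterpillar case follows from the concluding assertion of Theorem \ref{unique}. The only genuinely delicate point is this forward construction: the flower $F_{0,3}(v)$ is strictly larger than the naive ``spider'' $S_{2,2,2}$ whose presence already obstructs being a caterpillar, because each claw of the flower carries two extra leaves, and it is precisely indecomposability—through the absence of degree-$2$ vertices—that allows one to thicken each leg of the spider into a full claw while keeping the chosen subgraph induced. In a complete write-up I would be careful to justify the distinctness of the ten vertices and the absence of extra edges, both of which are immediate from the acyclicity of $T$.
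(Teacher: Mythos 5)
Your proof is correct and follows the route the paper intends: the corollary is stated there as an immediate consequence of Theorem \ref{unique}, and you supply exactly the omitted combinatorial translation, reducing the flower condition to forbidding an induced $F_{0,3}(v)$ (via triangle-freeness) and matching this with the caterpillar property. Your key observation that indecomposability forces every non-pendant vertex of $T$ to have degree at least $3$ (so each leg of a spider thickens to a full claw) is precisely the point that makes the crucial direction work, and your verification of induced-ness via acyclicity is sound.
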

		The following example illustrates that the lower bound $m(G)+1 $ is not always attained.
		\begin{example}
			Let $G=F_{h,k}(v)$ be a flower graph with $h+k \geq 3$. Then, it follows from
			\cite[Corollary 3.5]{CarlaR2018} that $ \reg( S/J_G) = m(G)+h+k-1>m(G)+1.$
		\end{example}
		
		\section{Regularity upper bound for generalized block graph}
		In this section, we give an improved upper bound for the regularity of binomial
		edge ideals of generalized block graphs. Let $u,v \in V(G)$ be such that $e=\{u,v\} \notin E(G)$, 
		then we denote by $G_e$, the graph on the vertex set $V(G)$ and edge set 
		$E(G_e) = E(G) \cup \{\{x,y\} : x,\; y \in N_G(u) \; or \; x,\; y \in N_G(v) \}$. An edge $e$ is said to be a 
		\textit{cut edge} of $G$ if the number of connected components of $G\setminus e$ is  larger than  that of 
		$G$.

		We now recall a result from \cite{KM3} that will be used repeatedly in this section. 
		
		\begin{lemma}\label{cut-lemma}
			\cite[Proposition 2.1]{KM3} Let $G$ be a graph and $e$ be a cut edge of $G$. Then,
			$$\reg(S/J_G) \leq max\{\reg(S/J_{G\setminus e}), \reg(S/J_{(G\setminus e)_e})+1\}.$$
		\end{lemma}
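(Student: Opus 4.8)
The plan is to realize $J_G$ as a one-binomial enlargement of $J_{G\setminus e}$ and to feed the resulting short exact sequence into the standard regularity rule for short exact sequences. Write $e=\{u,v\}$ with $u<v$ and set $f_e = x_uy_v-x_vy_u$, a form of degree $2$. Since $J_G = J_{G\setminus e} + (f_e)$, multiplication by $f_e$ yields the short exact sequence
\[
0 \longrightarrow \frac{S}{\,J_{G\setminus e}:f_e\,}(-2) \xrightarrow{\;\cdot f_e\;} \frac{S}{J_{G\setminus e}} \longrightarrow \frac{S}{J_G} \longrightarrow 0,
\]
where the shift $(-2)$ records $\deg f_e = 2$, and injectivity on the left is automatic from the definition of the colon ideal (its image is $J_G/J_{G\setminus e}$ and its cokernel is $S/J_G$).

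The crux is to identify the colon ideal, and this is precisely where the cut-edge hypothesis enters. I would prove
\[
J_{G\setminus e}:f_e = J_{(G\setminus e)_e}.
\]
The inclusion $\supseteq$ is formal and needs no hypothesis: every generator of $J_{(G\setminus e)_e}$ coming from an original edge already lies in $J_{G\setminus e}$, while for a newly added edge $\{a,b\}$ with $a,b\in N_{G\setminus e}(u)$ the three-term relations $x_u(x_ay_b-x_by_a),\,y_u(x_ay_b-x_by_a)\in J_{G\setminus e}$ give $(x_ay_b-x_by_a)f_e\in J_{G\setminus e}$, and symmetrically for neighbors of $v$. The reverse inclusion $\subseteq$ is the main obstacle. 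Because $e$ is a cut edge, in $H=G\setminus e$ the vertices $u$ and $v$ lie in distinct connected components $G_1\ni u$ and $G_2\ni v$; hence $J_H$ splits over the disjoint variable sets of its components and $f_e$ involves only the $u$-variables (from $G_1$) and the $v$-variables (from $G_2$). This separation is exactly what forces the colon to be generated by neighborhood-completion relations alone, namely $J_{(G_1)_u}$ together with $J_{(G_2)_v}$, with no extra generators. By contrast, for a non-cut edge $u$ and $v$ share connecting paths in $H$ and the colon acquires further elements; for a triangle with $e$ removed one checks that the common neighbor $w$ satisfies $x_w\in J_{G\setminus e}:f_e\setminus J_{G\setminus e}$, so the clean identity genuinely fails there. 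I would carry out $\subseteq$ by reducing, via the disjoint-variable decomposition $S\cong S_1\otimes_K S_2\otimes_K\cdots$, to the tensor-product statement $(J_{G_1}+J_{G_2}):f_e = J_{(G_1)_u}+J_{(G_2)_v}$ and analyzing annihilators of $f_e$ in $S_1/J_{G_1}\otimes_K S_2/J_{G_2}$ through the standard colon description of neighborhood completion.

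Granting the colon identity, the conclusion is immediate. Applying the regularity estimate for short exact sequences \cite[Corollary 18.7]{peeva} to the sequence above gives
\[
\reg\!\left(\frac{S}{J_G}\right) \le \max\!\left\{\reg\!\left(\frac{S}{J_{G\setminus e}}\right),\ \reg\!\left(\frac{S}{J_{(G\setminus e)_e}}(-2)\right)-1\right\},
\]
and since $\reg\big((S/J_{(G\setminus e)_e})(-2)\big) = \reg(S/J_{(G\setminus e)_e})+2$, the right-hand term equals $\reg(S/J_{(G\setminus e)_e})+1$, which is exactly the asserted bound. In short, the only genuine work is the colon computation; everything else is bookkeeping of the degree shift and the additivity of regularity across the short exact sequence.
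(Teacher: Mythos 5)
Your proposal is correct and is essentially the argument behind \cite[Proposition 2.1]{KM3}, which this paper quotes without proof: the short exact sequence $0 \to \left(S/(J_{G\setminus e}:f_e)\right)(-2) \xrightarrow{\cdot f_e} S/J_{G\setminus e} \to S/J_G \to 0$, the colon identity $J_{G\setminus e}:f_e = J_{(G\setminus e)_e}$, and the regularity bound for short exact sequences from \cite[Corollary 18.7]{peeva}, with the same degree-shift bookkeeping. The inclusion $\subseteq$ that you only sketch is exactly the known computation of Mohammadi--Sharifan invoked in \cite{KM3}, namely $J_{H}:f_e = J_{H_e} + I_e$ for $H = G\setminus e$ and $e=\{u,v\}$, where $I_e$ is generated by the monomials $x_{i_1}\cdots x_{i_t}\,y_{i_{t+1}}\cdots y_{i_s}$ attached to paths $u,i_1,\ldots,i_s,v$ in $H$; the cut-edge hypothesis forces $I_e=0$ since $u$ and $v$ lie in different components of $H$, and your triangle example, where $x_w$ obstructs the clean identity, is precisely such a path monomial.
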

		
		The \textit{degree} of a vertex  $v$ of $G$ is $\deg_G(v) =|N_G(v)|$. A vertex $v$ is said to
		be a \textit{pendant vertex} if $\deg_G(v) =1$.
		For $v \in V(G)$, let $\cdeg_G(v)$ denote the number of maximal cliques of $G$ which contains $v$, and $\pdeg_G(v)$
		denote the number of pendant vertices adjacent to $v$. Note that for every $v \in V(G)$, $\pdeg_G(v) \leq \cdeg_G(v)$.
		\begin{remark}\label{cdeg-rmk2}
			Let $G$ be a connected indecomposable generalized block graph which is not a star graph. 
			If $e=\{u,v\}$ is an edge with pendant vertex $u$,
			then $(G \setminus e)_e = (G\setminus u)_v \sqcup \{u\}$,
			$ \cl(G\setminus u) =\cl(G)-1$, $\cl((G\setminus u)_v) = \cl(G) -\cdeg_G(v) +1$,
			$J_{G\setminus e }= J_{G \setminus u}$ and $J_{(G \setminus e)_e} = J_{(G\setminus u)_v}$. 
			Also, $(G \setminus e)_e$ and $G\setminus e$ are generalized block graphs other than star graphs.
		\end{remark}
		
		A vertex $v \in V(G)$ with $\pdeg_G(v) \geq 1$ is said to be 
		of {\it type 1} if $\cdeg_G(v) = \pdeg_G(v)+1$, and of {\it type 2} if $ \cdeg_G(v) \geq \pdeg_G(v) +2$. We denote by 
		$\alpha(G)$, the number of 
		vertices of type $1$ in $G$  and by $\pv(G)$, the number of pendant vertices of $G$.
		
		\begin{lemma}
			If $G$ is a connected indecomposable graph on $[n]$ with $\pv(G)>0$, then $\pv(G)-\alpha(G)>0$.
		\end{lemma}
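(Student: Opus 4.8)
The plan is to reduce the statement to an elementary counting identity and a single structural application of indecomposability. First I would record the identity
\[
\sum_{v \in V(G)} \pdeg_G(v) = \pv(G),
\]
which holds because every pendant vertex has a unique neighbour and is thus counted exactly once, in the $\pdeg$ of that neighbour. Hence $\pv(G)$ is distributed among the vertices $v$ with $\pdeg_G(v)\ge 1$, which are precisely the vertices adjacent to a pendant vertex.

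Next I would classify these vertices. For any $v$, the $\pdeg_G(v)$ pendant edges at $v$ are pairwise distinct maximal cliques, so $\cdeg_G(v)\ge\pdeg_G(v)$, and equality forces every neighbour of $v$ to be a pendant vertex, i.e.\ $G$ is the star with centre $v$. After disposing of the star case (where the centre satisfies $\cdeg_G(v)=\pdeg_G(v)$ and the leaves satisfy $\pdeg_G=0$, so no vertex is of type $1$ and $\alpha(G)=0<\pv(G)$), I may assume $G$ is not a star. Then every vertex with $\pdeg_G(v)\ge 1$ satisfies $\cdeg_G(v)\ge\pdeg_G(v)+1$, and so is of type $1$ or type $2$. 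Writing $T_1$ and $T_2$ for the sets of type-$1$ and type-$2$ vertices, the identity above rearranges to
\[
\pv(G)-\alpha(G)=\sum_{v\in T_1}\bigl(\pdeg_G(v)-1\bigr)+\sum_{v\in T_2}\pdeg_G(v)\ \ge\ 0,
\]
since every typed vertex has $\pdeg_G(v)\ge 1$. This already gives $\pv(G)\ge\alpha(G)$; the entire content of the lemma is the strict inequality.

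The main obstacle, and the only place indecomposability is used, is ruling out equality. Equality in the displayed expression forces $T_2=\emptyset$ and $\pdeg_G(v)=1$ for every $v\in T_1$, whence $\cdeg_G(v)=2$ for each such $v$. Since $\pv(G)>0$, I would choose a pendant vertex $u$ with neighbour $v$; as $G$ is not a star and $T_2=\emptyset$, the vertex $v$ lies in $T_1$, so it is contained in exactly the two maximal cliques $\{u,v\}$ and one further clique $C_v$. I would then exhibit the decomposition $G=G_1\cup G_2$ with $G_1$ the edge $\{u,v\}$ and $G_2=G\setminus u$: their vertex sets meet only in $v$, $v$ is a free vertex of $G_1$ (a single clique), and in $G_2$ the pendant edge $\{u,v\}$ has disappeared, so $v$ lies only in $C_v$ and is again free. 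This contradicts the indecomposability of $G$, so equality is impossible and $\pv(G)-\alpha(G)>0$.

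I expect the counting identity and the type classification to be routine; the delicate point is verifying that peeling off the single pendant genuinely yields a decomposition in the sense recalled before Proposition \ref{decomposable}, in particular that $v$ is free in $G_2$, which uses precisely that $\cdeg_G(v)=2$ in the equality case.
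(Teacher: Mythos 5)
Your proof is correct and takes essentially the same route as the paper: both arguments come down to the fact that in a connected indecomposable graph a vertex $v$ with $\pdeg_G(v)\geq 1$ cannot have $\cdeg_G(v)=2$, the paper asserting directly that type-$1$ vertices satisfy $\cdeg_G(v)=\pdeg_G(v)+1\geq 3$ (whence $\pv(G)\geq 2\alpha(G)$), while you reach the same structural point through the equality case of the counting identity $\sum_{v}\pdeg_G(v)=\pv(G)$. Your explicit verification that peeling off the pendant edge $\{u,v\}$ gives a decomposition $G=\{u,v\}\cup (G\setminus u)$ with $v$ free in both parts is precisely the justification the paper leaves implicit, so the two proofs coincide in substance.
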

		\begin{proof} First, we assume that  $\alpha(G)=0$, then the claim follows. Now, assume that 
			$\alpha(G)=r>0$. Let $v_1,\ldots,v_r$ be all type $1$
			vertices of $G$. Since  $G$ is an indecomposable graph, $\cdeg_G(v_i)=\pdeg_G(v_i)+1 \geq 3$,
			for $i \in [r]$. Thus, $\pv(G)\geq 2 \alpha(G)$ which completes the proof.
		\end{proof}
		\begin{proposition}\label{tech-main}
			Let $G=G_1 \cup G_2$ be a decomposable  graph such that $G_1$ and $G_2$ are indecomposable
			and let $S_i={K}[x_j,y_j : j \in V(G_i)]$ for $i=1,2$. Suppose that one of the following conditions
			is satisfied: \begin{enumerate}
				\item[(a)] $G_1$ and $G_2$ are star graphs, or
				\item[(b)] $G_1$ is a star graph, $G_2$ is not a star graph and  
				$\reg(S_2/J_{G_2})\leq \cl(G_2)+\alpha(G_2)-\pv(G_2)$, or
				\item[(c)] for $i=1,2$, $G_i$ is not a star graph and $\reg(S_i/J_{G_i})\leq \cl(G_i)+\alpha(G_i)-\pv(G_i)$.
			\end{enumerate} Then  $\reg(S/J_G)\leq \cl(G)+\alpha(G)-\pv(G)$.
		\end{proposition}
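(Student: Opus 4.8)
The plan is to combine additivity of regularity over the decomposition with a careful count of how $\cl$, $\pv$, and the number $\alpha$ of type-$1$ vertices change when two graphs are glued along a free vertex. Throughout I write $\Phi(H)=\cl(H)+\alpha(H)-\pv(H)$, so the goal is $\reg(S/J_G)\le\Phi(G)$. Since $G=G_1\cup G_2$ is decomposable, Proposition \ref{decomposable} gives $\reg(S/J_G)=\reg(S_1/J_{G_1})+\reg(S_2/J_{G_2})$, so the whole problem reduces to comparing $\Phi(G)$ with $\Phi(G_1)+\Phi(G_2)$ and, for star factors, with the base value $\reg(S/J_{K_{1,a}})=2$. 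The latter I would justify by noting that a star is an indecomposable caterpillar, so Corollary \ref{extremal-tree} gives a unique extremal Betti number and hence $\reg(S/J_{K_{1,a}})=m(K_{1,a})+1=2$, whereas $\Phi(K_{1,a})=a+0-a=0$; thus each star carries an excess of exactly $2$ over its $\Phi$-value.

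Let $v$ be the common free vertex and, for $i=1,2$, put $p_i=1$ if $v$ is a pendant vertex of $G_i$ and $p_i=0$ otherwise; when $p_i=1$ let $u_i$ be the unique neighbour of $v$ in $G_i$. As $v$ is free in $G_i$ it lies in a single maximal clique $C_i$, and $\deg_G(v)=\deg_{G_1}(v)+\deg_{G_2}(v)\ge 2$, so $v$ is never pendant in $G$. First I would record the two easy identities $\cl(G)=\cl(G_1)+\cl(G_2)$ (the cliques $C_1,C_2$ stay distinct and maximal and no maximal clique crosses the gluing point) and $\pv(G)=\pv(G_1)+\pv(G_2)-p_1-p_2$ (the only pendant vertex lost is $v$, and only from the factors in which it was pendant).

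Next comes the heart of the argument, the type-$1$ count. For $w\in V(G_i)\setminus\{v\}$ one has $\cdeg_G(w)=\cdeg_{G_i}(w)$, and $\pdeg_G(w)=\pdeg_{G_i}(w)$ except when $w=u_i$, where the loss of the pendant neighbour $v$ yields $\pdeg_G(u_i)=\pdeg_{G_i}(u_i)-1$. Hence the type-$1$ status can change only at $v,u_1,u_2$. Since $\cdeg_{G_i}(v)=1$, the vertex $v$ is type $1$ in neither $G_i$ and contributes a term $[\,v\text{ type }1\text{ in }G\,]\in\{0,1\}$ to $\alpha(G)$; writing $\delta_i$ for the change in the type-$1$ indicator of $u_i$, an elementary case check on $\cdeg_{G_i}(u_i)-\pdeg_{G_i}(u_i)$ shows that dropping a single pendant neighbour lowers this indicator by at most one, i.e. $\delta_i\ge -1$. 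Combining with the identities above gives
\[
\Phi(G)=\Phi(G_1)+\Phi(G_2)+\Delta,\qquad \Delta=[\,v\text{ type }1\text{ in }G\,]+p_1(\delta_1+1)+p_2(\delta_2+1)\ge 0.
\]
The crucial refinement is that if $G_i=K_{1,a}$ with $a\ge 2$, then $v$ is a leaf ($p_i=1$) and $u_i$ is the centre with $\cdeg_{G_i}(u_i)=\pdeg_{G_i}(u_i)=a\ge 2$, so $u_i$ becomes type $1$ in $G$ ($\delta_i=+1$) and the star contributes $p_i(\delta_i+1)=2$ to $\Delta$.

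With this identity the three cases fall out. In (c) the hypotheses give $\reg(S_i/J_{G_i})\le\Phi(G_i)$, and $\Delta\ge 0$ yields $\reg(S/J_G)\le\Phi(G_1)+\Phi(G_2)\le\Phi(G)$. In (b), $G_1=K_{1,a}$ forces $\Delta\ge 2$, which absorbs the star's excess: $\reg(S/J_G)=2+\reg(S_2/J_{G_2})\le 2+\Phi(G_2)\le\Phi(G_2)+\Delta=\Phi(G)$. In (a) both factors contribute $2$, so $\Delta\ge 4$ while $\Phi(G_1)=\Phi(G_2)=0$, giving $\reg(S/J_G)=4\le\Phi(G)$ (in fact with equality). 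The main obstacle is exactly the type-$1$ bookkeeping of the third paragraph: one must confirm that no vertex other than $v,u_1,u_2$ can change type, and that the at-most-one drop in the indicator of $u_i$ is always paid for by the $+1$ coming from $p_i$ in the pendant count; by contrast the regularity additivity, the clique and pendant identities, and the star base value are all routine.
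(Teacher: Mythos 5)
Your proposal is correct and follows essentially the same route as the paper: additivity of regularity via Proposition \ref{decomposable} combined with bookkeeping of how $\cl$, $\alpha$ and $\pv$ change at the gluing vertex, with your identity $\Phi(G)=\Phi(G_1)+\Phi(G_2)+\Delta$, $\Delta\geq 0$ (and $\Delta\geq 2$ per star factor, since the star's centre becomes type $1$ in $G$), merely packaging into one formula the explicit subcases the paper runs through according to whether the gluing vertex is pendant in each factor and whether its neighbour is of type $1$ or type $2$. The only other deviation is cosmetic: you derive $\reg(S_i/J_{K_{1,a}})=m(K_{1,a})+1=2$ internally from Corollary \ref{extremal-tree} and Theorem \ref{unique}, where the paper cites \cite[Theorem 4.1(a)]{MR3195706}.
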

		\begin{proof} (a) Since  $G_1$ and $G_2$ are indecomposable star graphs, $\alpha(G)=2$, $\pv(G)=\cl(G)-2$.
			By Proposition \ref{decomposable} and \cite[Theorem 4.1(a)]{MR3195706}, $\reg(S/J_G)=4=\cl(G)+\alpha(G)-\pv(G)$.
			Hence, the claim follows.
			\par (b) Let $V(G_1)\cap V(G_2)=\{u\}$. First, assume that $u$ is not a pendant vertex of $G_2$.
			Therefore, $\alpha(G)=\alpha(G_2)+1$ and $\pv(G)=\pv(G_1)+\pv(G_2)-1$. 
			Note that $\cl(G)=\cl(G_1)+\cl(G_2)$ and $\cl(G_1)=\pv(G_1)$. By \cite[Theorem 4.1(a)]{MR3195706}, 
			$\reg(S_1/J_{G_1})=2$. Hence,  Proposition \ref{decomposable} yields that 
			$\reg(S/J_G)\leq \cl(G)+\alpha(G)-\pv(G)$. We now assume that $u$ is a pendant vertex of $G_2$.
			Note that  $\pv(G)=\pv(G_1)+\pv(G_2)-2$ and $\cl(G_1)=\pv(G_1)$.
			Let $v \in N_{G_2}(u)$. If $v$ is of type $1$ in $G_2$, then $\alpha(G)= \alpha(G_2)$, and hence, 
			the claim follows from Proposition \ref{decomposable}.
			If $v$ is of type $2$ in $G_2$, then $\alpha(G)= \alpha(G_2)+1$, and hence,    
			by Proposition \ref{decomposable}, $\reg(S/J_G)\leq \cl(G)+\alpha(G)-\pv(G)$.
			\par (c) Let $V(G_1) \cap V(G_2) =\{u\}$. Observe that, $\cl(G) = \cl(G_1) +\cl(G_2)$. 
			If $\deg_{G_1}(u),\deg_{G_2}(u)>1$, then  $\alpha(G)=\alpha(G_1)+\alpha(G_2)$ and $\pv(G) = \pv(G_1)+\pv(G_2)$.
			Thus, by Proposition \ref{decomposable}, $\reg(S/J_G) =\reg(S_1/J_{G_1})+\reg(S_2/J_{G_2})\leq \cl(G)+\alpha(G) -\pv(G).$ 
			
			Assume that $u$ is a pendant vertex of $G_1$, let $N_{G_1}(u)=\{u_1\}$ and $\deg_{G_2}(u)>1$(or vice versa).
			Then $\pv(G) = \pv(G_1)+\pv(G_2) -1$. If $u_1$ is of type $1$ in $G_1$, then  we have 
			$\alpha(G) = \alpha(G_1) +\alpha(G_2) -1$.  If $u_1$ is of type $2$ in $G_1$, then we have  
			$\alpha(G) =\alpha(G_1)+\alpha(G_2)$. Hence, by Proposition \ref{decomposable}, 
			$\reg(S/J_G) = \reg(S/J_{G_1})+\reg(S/J_{G_2}) \leq \cl(G)+\alpha(G) -\pv(G).$
			
			Now, assume that for $i=1,2$, $u$ is a pendant vertex     of $G_i$, let $N_{G_i}(u)=\{u_i\}$. Then,
			$\pv(G) = \pv(G_1)+\pv(G_2) -2$. If both $u_1$ and $u_2$ are of type $1$ in $G_1$ and $G_2$, respectively,
			then   we have $\alpha(G) = \alpha(G_1) +\alpha(G_2) -2$.  If both $u_1$ and $u_2$ are of type 
			$2$ in $G_1$ and $G_2$, respectively, then $\alpha(G) =\alpha(G_1)+\alpha(G_2)$.
			If $u_1$ is of type $1$ in $G_1$ and $u_2$ is of type $2$ in $G_2$(or vice versa), then we have
			$\alpha(G) = \alpha(G_1) +\alpha(G_2) -1$.
			Hence,  Proposition \ref{decomposable} yields $\reg(S/J_G)=\reg(S_1/J_{G_1})+\reg(S_2/J_{G_2}) 
			\leq \cl(G)+\alpha(G) -\pv(G).$
		\end{proof}
		
		We now obtain a tight upper bound for the regularity of binomial edge ideals of connected indecomposable generalized block graphs. 
		\begin{theorem}\label{chordal-improved}
			Let $G$ be  a connected indecomposable  generalized block graph on $[n]$ which is not a star graph. Then,  
			$\reg(S/J_G) \leq \cl(G)+\alpha(G) - \pv(G)$.
		\end{theorem}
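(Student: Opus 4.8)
The plan is to prove the bound by induction on the number of vertices, using the pendant-vertex/cut-edge machinery assembled in Remark \ref{cdeg-rmk2} and Lemma \ref{cut-lemma}, with Proposition \ref{tech-main} handling the decomposable pieces that arise. The base case is when $G$ has no internal vertices beyond a single clique, i.e. $G$ is a complete graph, where $\cl(G)=1$, $\pv(G)=0$, $\alpha(G)=0$, and $\reg(S/J_G)=1\le 1$ holds by the Eagon-Northcott resolution. For the inductive step, I would first dispose of the case $\pv(G)=0$: here every maximal clique meets the rest of $G$ along cut sets, and one can reduce to a genuinely smaller generalized block graph by contracting a leaf clique, controlling the change in $\cl$ and verifying the inequality directly.

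The heart of the argument is the case $\pv(G)>0$. Here I would pick a pendant vertex $u$ with neighbour $v$, set $e=\{u,v\}$, and apply Lemma \ref{cut-lemma} to the cut edge $e$, obtaining
\[
\reg(S/J_G)\le\max\{\reg(S/J_{G\setminus e}),\,\reg(S/J_{(G\setminus e)_e})+1\}.
\]
By Remark \ref{cdeg-rmk2} we have $J_{G\setminus e}=J_{G\setminus u}$ with $\cl(G\setminus u)=\cl(G)-1$, and $J_{(G\setminus e)_e}=J_{(G\setminus u)_v}$ with $(G\setminus e)_e=(G\setminus u)_v\sqcup\{u\}$ and $\cl((G\setminus u)_v)=\cl(G)-\cdeg_G(v)+1$. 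Both $G\setminus u$ and $(G\setminus u)_v$ are generalized block graphs of smaller order, so the induction hypothesis applies to their indecomposable components, after which I reassemble the regularity bounds through Proposition \ref{tech-main}. The isolated vertex in $(G\setminus e)_e$ contributes nothing to the regularity, so effectively one bounds $\reg(S/J_{(G\setminus u)_v})$.

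The delicate bookkeeping — and the step I expect to be the main obstacle — is tracking how $\alpha$ and $\pv$ change under the two operations $G\mapsto G\setminus u$ and $G\mapsto (G\setminus u)_v$, and doing so by a careful case analysis on the type of $v$ (type $1$ versus type $2$, i.e. whether $\cdeg_G(v)=\pdeg_G(v)+1$ or $\cdeg_G(v)\ge\pdeg_G(v)+2$) and on whether $v$ has further pendant neighbours. When $v$ is of type $2$, removing $u$ decreases $\pv$ by one but leaves $v$ an internal vertex of the right type, so the inductive estimate on $G\setminus u$ already yields $\reg(S/J_{G\setminus u})\le\cl(G)+\alpha(G)-\pv(G)$; the merged graph $(G\setminus u)_v$ collapses $\cdeg_G(v)-1\ge1$ cliques into one, which is exactly what absorbs the extra $+1$ from the cut-edge formula. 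When $v$ is of type $1$, the count of type-$1$ vertices shifts, and one must verify that the combined maximum still respects the target inequality; here the splitting of $G\setminus u$ or $(G\setminus u)_v$ into indecomposable pieces glued at free vertices is essential, and Proposition \ref{tech-main}(a)--(c) is invoked to recombine the component bounds. Once each configuration of $v$'s type and pendant-degree is checked against the two branches of the $\max$, the inequality $\reg(S/J_G)\le\cl(G)+\alpha(G)-\pv(G)$ follows in every case, completing the induction.
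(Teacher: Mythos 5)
Your main engine coincides with the paper's: delete a pendant edge $e=\{u,v\}$, apply the cut-edge bound of Lemma \ref{cut-lemma}, use Remark \ref{cdeg-rmk2} to identify $J_{G\setminus e}=J_{G\setminus u}$ and $J_{(G\setminus e)_e}=J_{(G\setminus u)_v}$ and to track $\cl$, then do a case analysis on whether $v$ is of type $1$ or type $2$, invoking Proposition \ref{tech-main} when deletion produces a decomposable graph or star components. Your induction parameter (number of vertices, instead of the paper's induction on $k(G)+m(G)$ with an inner induction on the pendant degree $r_k$) is legitimate and arguably cleaner, since both $G\setminus u$ and $(G\setminus u)_v$ lose a vertex, so no separate inner induction is needed. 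One piece of looseness in the type-$2$ branch: the collapse of $\cdeg_G(v)-1\geq 1$ cliques is not by itself enough to absorb the $+1$ from the cut-edge formula, because the remaining pendant vertices at $v$ also stop being pendant in $(G\setminus u)_v$; what saves you is precisely $\cdeg_G(v)\geq \pdeg_G(v)+2$, giving net change $-(\cdeg_G(v)-1)+\pdeg_G(v)\leq -1$. This is exactly the bookkeeping you defer to, and it does close.

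The genuine gap is your treatment of the case $\pv(G)=0$. There the claimed inequality is exactly $\reg(S/J_G)\leq \cl(G)$, i.e., the Saeedi Madani--Kiani conjecture for this class of chordal graphs, which the paper does \emph{not} reprove but imports from \cite[Theorem 3.5]{MKM2018} (or \cite[Theorem 3.15]{AR2}) as the base case $k(G)=0$ of its induction. Your sketch --- ``reduce by contracting a leaf clique and verify the inequality directly'' --- does not work as stated: regularity is not monotone under the clique-collapse $G\mapsto G_A$, so there is no direct comparison between $\reg(S/J_G)$ and $\reg(S/J_{G_A})$ alone. To make this step honest you must either cite the known chordal bound, or run the full argument of Section 3: write $J_G=Q_1\cap Q_2$ with $Q_1=J_{G_A}$ for the minimal cut set $A$ attached to a leaf of $\Delta(G)$, use the exact sequence (\ref{ses1}) together with \cite[Corollary 18.7]{peeva} to get
\[
\reg(S/J_G)\leq \max\bigl\{\reg(S/J_{G_A}),\ \reg(S/J_{G[\overline{A}]}),\ \reg(S/J_{G_A[\overline{A}]})+1\bigr\},
\]
and then observe that $\cl(G_A)=\cl(G_A[\overline{A}])\leq \cl(G)-1$, $\cl(G[\overline{A}])\leq \cl(G)$, and all three graphs have strictly fewer minimal cut sets (Lemma \ref{gen-lem}), so an induction on $m(G)$ closes. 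Either repair is available with the paper's own tools, but as written this case is unproved --- and it is needed at every level of your vertex-count induction, since deleting pendant vertices and completing neighborhoods repeatedly produces pendant-free generalized block graphs, not only at the bottom of the recursion.
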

		\begin{proof}
			Let $k(G) =|\{ v : \pdeg_G(v) \geq 1 \}|$. We proceed by induction on $k(G)+m(G) \geq 0$. 
			For $k(G)=0$, $\pv(G)=\alpha(G)=0$, and hence, the result is immediate from \cite[Theorem 3.5]{MKM2018}
			or \cite[Theorem 3.15]{AR2}. If $m(G)=0$, then $G$ is a complete graph and the assertion is obvious. 
			Assume that $k=k(G) >0$, $m(G)>0$ and the assertion is true up to $k(G)+m(G) -1$.
			
			Let $v_1,\ldots,v_{k} \in V(G)$ be such that for each 
			$i=1,\ldots,k$, $\pdeg(v_i)=r_i \geq 1$. For each $i =1,\ldots,k$, let
			$e_{i,1} =\{v_i,w_{i,1}\},\ldots,e_{i,r_i}=\{v_i,w_{i,r_i}\}$ be pendant edges incident to $v_i$. 
			Since $G$ is an indecomposable graph, $\cdeg_G(v_i)=s_i \geq 3$.
			
			We proceed by induction on $r_{k}$. If $r_{k} =1$, then $v_{k}$ is of type $2$. Notice that  
			$k((G\setminus w_{k,1})_{v_k}) = k(G)-1$ and $m((G\setminus w_{k,1})_{v_k})=m(G)-1$. 
			Thus, by induction on $k(G)+m(G)$ and Remark \ref{cdeg-rmk2}, we have
			\begin{align*}
			\reg(S/J_{(G\setminus e_{k,1})_{e_{k,1}}}) & = \reg(S/J_{(G\setminus w_{k,1})_{v_k}}) \\
			& \leq \cl((G\setminus w_{k,1})_{v_k}) +\alpha((G\setminus  w_{k,1})_{v_k}) -\pv((G\setminus w_{k,1})_{v_k}) \\
			& = \cl(G) - \cdeg_G(v_k) + \alpha (G) -\pv(G)+2 \\ & \leq \cl(G)+\alpha(G)-\pv(G)-1.
			\end{align*}
			If $\cdeg_G(v_k)=3$, then $G\setminus w_{k,1} =G_1 \cup G_2$ is a decomposable graph. 
			If $G_i$ is not a star graph, then $k(G_i)\leq k(G)$ and $m(G_i)<m(G)$, and hence,
			by induction on $k(G)+m(G)$, we have $\reg(S_i/J_{G_i}) \leq \cl(G_i)+\alpha(G_i)-\pv(G_i)$.
			Note that  $G$ satisfies  the assumption of Proposition \ref{tech-main}.
			It follows from Proposition \ref{tech-main} that 
			\begin{align*}
			\reg(S/J_{G\setminus e_{k,1}}) & = \reg(S/J_{G\setminus w_{k,1}})\\
			&\leq \cl(G\setminus w_{k,1})+\alpha(G\setminus w_{k,1}) -\pv(G\setminus w_{k,1}) \\
			& = \cl(G) + \alpha(G) -\pv(G). 
			\end{align*}
			If $\cdeg_G(v_k)>3$, then $G\setminus w_{k,1}$ is an indecomposable generalized 
			block graph with $k(G\setminus w_{k,1})=k(G)-1$ and $m(G\setminus w_{k,1})=m(G)$. 
			Thus, by induction on $k(G)+m(G)$ and Remark \ref{cdeg-rmk2}, we have
			\begin{align*}
			\reg(S/J_{G\setminus e_{k,1}}) & = \reg(S/J_{G\setminus w_{k,1}}) \\ 
			& \leq \cl(G\setminus w_{k,1})+\alpha(G\setminus w_{k,1}) -\pv(G\setminus w_{k,1}) \\
			& = \cl(G) + \alpha(G) -\pv(G). 
			\end{align*}
			In both the cases, we get $\reg(S/J_{G\setminus e_{k,1}}) \leq \cl(G) + \alpha(G) -\pv(G).$
			Hence, by Lemma \ref{cut-lemma}, we have $$\reg(S/J_G) \leq \cl(G) + \alpha(G) -\pv(G).$$ 
			
			Assume now that $r_k >1$. By Remark \ref{cdeg-rmk2}, we get  $k((G \setminus w_{k,r_k})_{v_k})=k(G)-1$, 
			$\pv((G \setminus w_{k,r_k})_{v_k})=\pv(G)-r_k$,  $\cl((G \setminus w_{k,r_k})_{v_k})=\cl(G)-\cdeg_G(v_k)+1$ 
			and $m((G \setminus w_{k,r_k})_{v_k})<m(G)$. 
			
			\textbf{Case (1)}: $v_k$ is of type $2$ in $G$.
			
			Thus, $\alpha((G \setminus w_{k,r_k})_{v_k})=\alpha(G)$, $\cdeg_G(v_k)-r_k \geq 2$,  and hence,
			by Remark \ref{cdeg-rmk2} and induction on $k(G)+m(G)$, we have 
			\begin{align*}
			\reg(S/J_{(G\setminus e_{k,r_k})_{e_{k,r_k}}}) & = \reg(S/J_{(G \setminus w_{k,r_k})_{v_k}}) \\
			& \leq \cl((G \setminus w_{k,r_k})_{v_k}) +\alpha((G \setminus w_{k,r_k})_{v_k}) -\pv((G \setminus w_{k,r_k})_{v_k})\\
			& = \cl(G) -\cdeg_G(v_k) +1 + \alpha (G) -\pv(G) +r_k \\
			& \leq \cl(G) + \alpha(G) -\pv(G) -1.
			\end{align*}
			Note that $v_k$ is of type $2$ in $G\setminus w_{k,r_k}$ and $G\setminus w_{k,r_k}$ is indecomposable.
			It follows from Remark \ref{cdeg-rmk2}, and induction on $r_k$ that 
			\begin{align*}
			\reg(S/J_{G \setminus e_{k,r_k}}) &= \reg(S/J_{G\setminus w_{k,r_k}})\\
			& \leq \cl(G\setminus w_{k,r_k}) +\alpha(G\setminus w_{k,r_k}) - \pv(G\setminus w_{k,r_k}) \\
			& = \cl(G) + \alpha (G) - \pv(G).
			\end{align*}
			Thus, by Lemma \ref{cut-lemma}, $\reg(S/J_G) \leq \cl(G)+\alpha(G)-\pv(G)$.
			
			\textbf{Case (2)}: $v_k$ is of type $1$ in $G$.
			
			Thus, $\alpha((G \setminus w_{k,r_k})_{v_k})=\alpha(G)-1$, $\cdeg_G(v_k)-r_k =1$,  and hence, 
			by Remark \ref{cdeg-rmk2} and induction on $k(G)+m(G)$,
			\begin{align*}
			\reg(S/J_{(G\setminus e_{k,r_k})_{e_{k,r_k}}}) & = \reg(S/J_{(G \setminus w_{k,r_k})_{v_k}}) \\
			& \leq \cl((G \setminus w_{k,r_k})_{v_k}) +\alpha((G \setminus w_{k,r_k})_{v_k}) -\pv((G \setminus w_{k,r_k})_{v_k})\\
			& = \cl(G) -\cdeg_G(v_k) + \alpha (G) -\pv(G) +r_k \\
			& = \cl(G) + \alpha(G) -\pv(G) -1.
			\end{align*}
			Note that $v_k$ is of type $1$ in $G\setminus w_{k,r_k}$. 
			If $r_k=2$, then $G\setminus w_{k,r_k}$
			is a decomposable graph with decomposition $(G\setminus \{w_{k,1},w_{k,2}\}) \cup \{e_{k,1}\}$.
			Set $H=G\setminus \{w_{k,1},w_{k,2}\}$. If $H$ is not a star graph, then
			$k(H)\leq k(G)$ and $m(H)<m(G)$. Thus, by induction on $k(G)+m(G)$, $\reg(S/J_{H}) \leq \cl(H) + \alpha(H) -\pv(H).$ Note that            
			$\cl(H)=\cl(G)-2$. If $\deg_H(v_k)=1$, then $\pv(H)=p(G)-1$ and $\alpha(H) \leq \alpha(G)$, and if 
			$\deg_H(v_k)>1$, then  $\pv(H)=\pv(G)-2$ and
			$\alpha(H)=\alpha(G)-1$. Therefore, in both  cases, $\reg(S/J_{H}) \leq \cl(G) + \alpha(G) -\pv(G)-1$.
			Now, it follows from Proposition \ref{decomposable} that $\reg(S/J_{G\setminus e_{k,r_k}}) \leq \cl(G)+\alpha(G)-\pv(G)$. 
			If  $G\setminus \{w_{k,1},w_{k,2}\}$ is  a star graph, then $\alpha(G)=2$ and $\cl(G)-\pv(G)=1$.
			By Proposition \ref{decomposable},
			$\reg(S/J_{G\setminus e_{k,r_k}}) =3$.
			Thus,  $\reg(S/J_{G\setminus e_{k,r_k}}) \leq \cl(G)+\alpha(G)-\pv(G)$.
			
			If $r_k>2$, then $G\setminus w_{k,r_k}$ is indecomposable.  By induction on $r_k$, we have 
			\begin{align*}
			\reg(S/J_{G \setminus e_{k,r_k}}) &= \reg(S/J_{G\setminus w_{k,r_k}})\\
			& \leq \cl(G\setminus w_{k,r_k}) +\alpha(G\setminus w_{k,r_k}) - \pv(G\setminus w_{k,r_k}) \\
			& = \cl(G) + \alpha (G) - \pv(G)-1.
			\end{align*}  
			Thus, for $r_k \geq 2$, $\reg(S/J_{G \setminus e_{k,r_k}}) \leq
			\cl(G) + \alpha (G) - \pv(G).$ 
			Consequently, by   Lemma \ref{cut-lemma}, $\reg(S/J_G) \leq \cl(G)+\alpha(G) -\pv(G).$ Hence, the assertion follows.
		\end{proof}
		The following example illustrates that the upper bound is not always attained in Theorem \ref{chordal-improved}.
		\begin{example}
			Let $G$ be a tree as shown in Fig.  \ref{tree}. Notice that $\cl(G)=13, \; \alpha(G)=4$ and $\pv(G)=8$.
			Thus, $\cl(G)+\alpha(G)-\pv(G)=9$. 
			Using Macaulay2 \cite{M2}, we get $\reg(S/J_G)=8$.            
			\begin{figure}
				\begin{center}
					
					\begin{tikzpicture}[scale=.5]
					
					\draw  (1,-1)-- (3,-1);
					
					\draw  (3,-1)-- (5,-1);
					
					\draw  (5,-1)-- (5,1);
					
					\draw  (5,1)-- (7,1);
					
					\draw  (5,-1)-- (7,-1);
					
					\draw  (7,-1)-- (9,-1);
					
					\draw  (9,-1)-- (9,1);
					
					\draw  (9,-1)-- (11,-1);
					
					\draw  (7,-1)-- (7,-3);
					
					\draw  (7,-3)-- (5,-3);
					
					\draw  (7,-3)-- (9,-3);
					
					\draw  (3,-3)-- (3,-1);
					
					\draw  (3,-1)-- (3,-3);
					
					\draw  (5,1)-- (3,1);
					
					\begin{scriptsize}
					
					\draw  (1,-1) circle (1.5pt);
					
					\draw (1,-0.6) node {$1$};
					
					\draw  (3,-1) circle (1.5pt);
					
					\draw (3,-0.6) node {$2$};
					
					\draw  (5,-1) circle (1.5pt);
					
					\draw (5.3,-0.6) node {$4$};
					
					\draw  (5,1) circle (1.5pt);
					
					\draw (5,1.4) node {$5$};
					
					\draw  (7,1) circle (1.5pt);
					
					\draw (7,1.4) node {$7$};
					
					\draw  (7,-1) circle (1.5pt);
					
					\draw (7,-0.6) node {$8$};
					
					\draw  (9,-1) circle (1.5pt);
					
					\draw (9.3,-0.6) node {$9$};
					
					\draw  (9,1) circle (1.5pt);
					
					\draw (9,1.4) node {$10$};
					
					\draw (11,-1) circle (1.5pt);
					
					\draw (11,-0.6) node {$11$};
					
					\draw  (7,-3) circle (1.5pt);
					
					\draw (7,-3.4) node {$12$};
					
					\draw  (5,-3) circle (1.5pt);
					
					\draw (5,-3.4) node {$13$};
					
					\draw  (9,-3) circle (1.5pt);
					
					\draw (9,-3.4) node {$14$};
					
					\draw  (3,-3) circle (1.5pt);
					
					\draw (3,-3.4) node {$3$};
					
					\draw  (3,1) circle (1.5pt);
					
					\draw (3,1.4) node {$6$};
					
					\end{scriptsize}
					
					\end{tikzpicture}
					\caption{}\label{tree}
				\end{center}            
			\end{figure}
		\end{example}
		As an immediate consequence, we have the following:
		\begin{corollary}\label{reg-tree}
			Let $T$ be an indecomposable tree on $[n]$ which is not 
			a star graph. Then,  $\reg(S/J_T) \leq \cl(T) + \alpha(T) -\pv(T)$.
		\end{corollary}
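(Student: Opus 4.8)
The plan is to recognize this corollary as a direct specialization of Theorem \ref{chordal-improved}, so the only substantive task is to verify that an arbitrary tree falls inside the class of generalized block graphs; once that is done, the regularity bound is immediate.

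First I would check that every tree $T$ is a generalized block graph. A tree contains no cycle whatsoever, hence no induced cycle of length $\geq 4$, so $T$ is chordal. Moreover, since $T$ has no triangle, every clique of $T$ has at most two vertices, and consequently the maximal cliques of $T$ are exactly its edges; that is, each facet of $\Delta(T)$ is a two-element set $\{i,j\} \in E(T)$. To confirm the defining condition of a generalized block graph, suppose $F_i, F_j, F_k \in \mathcal{F}(\Delta(T))$ satisfy $F_i \cap F_j \cap F_k \neq \emptyset$. Because distinct edges of a tree meet in at most one vertex, this nonempty common intersection forces all three edges to share a single vertex $v$, whence $F_i \cap F_j = F_i \cap F_k = F_j \cap F_k = \{v\}$. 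Thus the generalized block graph condition holds trivially for $T$.

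With this in hand, the indecomposable tree $T$, being connected and not a star graph by assumption, is precisely a connected indecomposable generalized block graph on $[n]$ that is not a star graph. The hypotheses of Theorem \ref{chordal-improved} are therefore met, and applying it yields $\reg(S/J_T) \leq \cl(T) + \alpha(T) - \pv(T)$ directly. There is no genuine obstacle in this argument: all the work lives in Theorem \ref{chordal-improved}, and the corollary is merely the observation that trees are a subclass of generalized block graphs, so the invariants $\cl$, $\alpha$, and $\pv$ carry over verbatim.
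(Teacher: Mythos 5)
Your proposal is correct and takes exactly the paper's route: the paper states this corollary as an immediate consequence of Theorem \ref{chordal-improved}, with the implicit observation you make explicit, namely that every tree is a connected chordal graph whose maximal cliques are its edges (indeed a block graph, hence a generalized block graph), so the theorem's hypotheses hold verbatim. Your verification of the triple-intersection condition is accurate, and no further argument is needed.
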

		Finally we show two classes of block graphs that attain
		the upper bound  $\cl(G)+\alpha(G) -\pv(G).$
		\begin{corollary}
			If $G$ is an indecomposable caterpillar that is not a star graph or a flower graph, 
			then $\reg(S/J_G)=\cl(G)+\alpha(G) -\pv(G).$
		\end{corollary}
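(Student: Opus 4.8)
The plan is to treat the two asserted families separately and, in each case, to reduce the claimed equality to a combinatorial count, since the exact value of $\reg(S/J_G)$ is already available for both. Both an indecomposable non-star caterpillar and a flower graph $F_{h,k}(v)$ with $h+k\ge 3$ are connected indecomposable generalized block graphs that are not star graphs, so Theorem \ref{chordal-improved} supplies the inequality $\reg(S/J_G)\le \cl(G)+\alpha(G)-\pv(G)$; the work is therefore to check that the right-hand side equals the known regularity, which will simultaneously force equality in Theorem \ref{chordal-improved}.

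For an indecomposable caterpillar $T$ on $[n]$ that is not a star, I first apply Corollary \ref{extremal-tree} to deduce that $S/J_T$ has a unique extremal Betti number, and then Theorem \ref{unique} to obtain $\reg(S/J_T)=m(T)+1$. It thus suffices to verify $\cl(T)+\alpha(T)-\pv(T)=m(T)+1$. Since $T$ is a tree, its maximal cliques are precisely its edges, whence $\cl(T)=n-1$, and its minimal cut sets are precisely its internal (cut) vertices, whence $m(T)=\iv(T)=n-\pv(T)$. The crucial observation is that $\alpha(T)=2$: indecomposability of a tree forces every internal vertex to have degree at least $3$, so each spine vertex carries at least one pendant vertex and is genuinely of type $1$ or type $2$; because a spine vertex's non-pendant neighbours are exactly its neighbours along the spine, it is of type $1$ exactly when it is an endpoint of the spine and of type $2$ otherwise. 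As the spine is a path on at least two vertices (a one-vertex spine would make $T$ a star), it has exactly two endpoints, giving $\alpha(T)=2$. Substituting, $\cl(T)+\alpha(T)-\pv(T)=(n-1)+2-\pv(T)=(n-\pv(T))+1=m(T)+1=\reg(S/J_T)$.

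For a flower graph $G=F_{h,k}(v)$ with $h+k\ge 3$, the exact value $\reg(S/J_G)=m(G)+h+k-1$ is recorded in the example preceding this corollary (from \cite[Corollary 3.5]{CarlaR2018}), so once more I only need a combinatorial identity. Counting directly, the maximal cliques of $G$ are the $h$ triangles together with the $3k$ edges of the $k$ attached stars, so $\cl(G)=h+3k$; the pendant vertices are the $2k$ outer leaves of the stars, so $\pv(G)=2k$; each star-centre $w$ satisfies $\cdeg_G(w)=3=\pdeg_G(w)+1$ and is hence of type $1$, while the central vertex $v$ has no pendant neighbour and contributes nothing, so $\alpha(G)=k$; and the minimal cut sets of $G$ are $\{v\}$ together with the $k$ star-centres, so $m(G)=k+1$. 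Therefore $\cl(G)+\alpha(G)-\pv(G)=(h+3k)+k-2k=h+2k=(k+1)+h+k-1=m(G)+h+k-1=\reg(S/J_G)$.

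The two arguments are little more than careful bookkeeping once the exact regularities are in place, and I expect the only genuine subtlety to be the type-$1$/type-$2$ classification of vertices: specifically, the use of indecomposability to exclude degree-$2$ internal vertices (which is what pins down the spine endpoints, and only them, as the type-$1$ vertices of a caterpillar) and the observation that the hub $v$ of a flower, having no pendant neighbour, falls outside both types and so is absent from $\alpha(G)$. All remaining steps are direct substitutions matching the count against the known value of $\reg(S/J_G)$.
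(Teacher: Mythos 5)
Your proof is correct and follows essentially the same route as the paper: the caterpillar case via Corollary \ref{extremal-tree} and Theorem \ref{unique} giving $\reg(S/J_T)=m(T)+1$, the flower case via \cite[Corollary 3.5]{CarlaR2018}, matched in each case against $\cl(G)+\alpha(G)-\pv(G)$. The only difference is that you explicitly verify the combinatorial identities ($\alpha(T)=2$, $\cl(T)=n-1$, $m(T)=n-\pv(T)$ for caterpillars, and the counts $\cl=h+3k$, $\alpha=k$, $\pv=2k$, $m=k+1$ for flowers) that the paper asserts without detail, which is a welcome filling-in rather than a departure.
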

		\begin{proof}
			First, let $G$ be an indecomposable caterpillar that is not a star graph. Then 
			$m(G)+1= \cl(G)+\alpha(G) -\pv(G)$. Therefore, by  Corollaries \ref{extremal-tree} 
			and \ref{reg-tree},  $\reg(S/J_G)= \cl(G) + \alpha(G) -\pv(G)$.
			
			Now, let $G=F_{h,k}(v)$ be a flower graph. Then $\cl(G)+\alpha(G)-\pv(G)=m(G)+\cdeg_G(v)-1=h+2k$. 
			Thus, by virtue of \cite[Corollary 3.5]{CarlaR2018}, $\reg(S/J_G)=\cl(G)+\alpha(G)-\pv(G)$.
			
		\end{proof}

\vskip 2mm
\noindent
\textbf{Acknowledgements:} The author is grateful to his advisor, Prof. A. V. Jayanthan for the constant support, valuable ideas, and suggestions.
The author thanks the National Board for Higher Mathematics, India, for the financial support. The author also wishes to express his sincere gratitude to the anonymous referee whose comments help to improve the exposition in great detail.

\bibliographystyle{plain}  
\bibliography{biblog}
\end{document}